%
\documentclass[runningheads]{llncs}
\usepackage[T1]{fontenc}
%
\usepackage{amsmath}
\usepackage{amssymb}

\newcommand{\norm}[1]{\left\lVert #1 \right\rVert}

\usepackage{epsfig}
\usepackage{subfig}
\usepackage{graphicx}
%
%
\begin{document}
	\title{Fractal Based Rational Cubic Trigonometric Zipper Interpolation with Positivity Constraints
	}
	\titlerunning{Positivity Preserving RCTZFIF}
	%
	\author{A. K. Sharma\inst{1}\orcidID{0009-0002-5562-4331} \and
		K. R. Tyada\inst{2}\orcidID{0000-0002-3780-0357}}
	\authorrunning{Sharma and Tyada}
	%
	\institute{Department of Mathematics and Computer Science,\\ Sri Sathya Sai Institute of Higher Learning,\\ Prasanthi Nilayam, Puttaparthi, Andhra Pradesh, India - 515134
		\email{abhishekkumars@sssihl.edu.in}\\ \and
		Department of Mathematics and Computer Science,\\ Sri Sathya Sai Institute of Higher Learning,\\ Prasanthi Nilayam, Puttaparthi, Andhra Pradesh, India - 515134\\
		\email{kurmaraotyada@sssihl.edu.in}}
	\maketitle              
	\begin{abstract}
		We propose a novel fractal based interpolation scheme termed Rational Cubic Trigonometric Zipper Fractal Interpolation Functions (RCTZFIFs) designed to model and preserve the inherent geometric property, positivity, in given univariate dataset. The method employs a combination of rational cubic trigonometric functions within a zipper fractal framework, offering enhanced flexibility through shape parameters and scaling factors. Rigorous error analysis is presented to establish the convergence of the proposed zipper fractal interpolants to the underlying classical fractal functions, and subsequently, to the data-generating function. We derive necessary constraints on the scaling factors and shape parameters to ensure positivity preservation. By carefully selecting the signature, shape parameters, and scaling factors within these bounds, we construct a class of RCTZFIFs that effectively preserve the positive nature of the data. Numerical experiments and visualisations demonstrate the efficacy and robustness of our approach in preserving positivity while offering fractal flexibility.
		
		\keywords{Fractal Interpolation  \and Zipper Fractal Interpolation \and Trigonometric Base Functions \and Positivity.}
	\end{abstract}
	
	\section{Introduction}
	In the fields of science and engineering, various kinds of data points are obtained from sampling or experimentation. Finding the values in the intermediate independent points and visualising the data becomes very important. However, it is not sufficient for the interpolant to merely fit the data; it should also be capable of preserving the intrinsic geometric aspects of the data, such as convexity, monotonicity and positivity. For example, concentrations of substances in chemistry experiments are inherently positive, and any interpolant that fails to preserve positivity can produce physically meaningless results. Shape preservation is an important characteristic of fractal interpolation. A considerable amount of research can be found on shape preserving classical interpolants. In \cite{AMA2013}, Abbas presented a positivity preserving $\mathcal{C}^2-$piecewise rational cubic interpolation scheme. Sarfaraz~\cite{sarfraz2012shape}, introduced a piecewise rational cubic interpolation technique specifically designed for shape-preserving of 2D data, where each interval consists of four shape parameters- two of which are used for maintaining the shape characteristics while the other two offers flexibility as to enhance the curve. Monotonicity and convexity preserving $\mathcal{C}^1$-quadratic splines were developed by Schumaker~\cite{schumaker1983shape}.
	
	\par Classical interpolation schemes, such as polynomial, spline and rational methods, produce smooth interpolants that are differentiable everywhere except for some finite number of points~\cite{navascues2014fractal}. However, in many real world applications, the data is far from smooth. They exhibit irregularities, oscillations and complex patterns that are not well captured by the classical interpolants. To model such data points, it becomes essential to develop an interpolation technique that is capable of mimicking the nonsmooth nature observed in practice. This led to the consideration of fractal interpolation functions (FIFs), which offer a powerful framework for construction of interpolants with controlled irregularity \cite{navascues2014fractal}. 
	
	\par In \cite{Barnsley86}, Barnsley introduced fractal interpolation functions (FIFs), which are constructed using iterated function systems (IFS). The graph of an FIF corresponds to the attractor of the associated IFS. Furthermore, FIFs can be characterised as fixed points of the Read-Bajraktarević operator defined on an appropriate function space~\cite{BarHar86}. A vast amount of research has been done in FIFs owing to their utility in various domains. Barnsley in ~\cite{BEH89} introduced hidden variables fractal interpolation functions to provide more flexibility to the FIFs. The values of the interpolants depends on the "hidden variable" and potentially gives a better approximation. Chand and Navascués~\cite{CN2009}, extended the classical Hermite interpolation through a class of fractal interpolants. Inspired by the work of Barnsley, M.Navascués~\cite{Navascues2005} defined the $\alpha-$fractal interpolation functions, which not only interpolated but also approximated any continuous functions defined over the interval in $\mathbb{R}$.
	
	\par  In ~\cite{AMA2012}, a rational cubic monotonicity preserving fractal interpolation function was developed. Chand and Tyada~\cite{CK2018} studied the interpolation and approximation of constrained data. Rational interpolation functions provide proficient shape parameters to preserve the shape of the data. A $\mathcal{C}^2$-rational quintic fractal interpolation function that ensures monotonicity and positivity through suitable constraints on shape and scaling parameters was developed by Tyada~\cite{KC2018}. In~\cite{CK2015}, the shape preserving rational cubic FIF was developed for interpolation data lying between two piecewise straight lines. But such interpolation techniques cannot represent curves like circular arc, periodic data points, etc. To solve this problem, trigonometric basis functions or a blend of trigonometric and polynomial functions are used. For example, in ~\cite{chand2015positivity}, a rational cubic trigonometric FIF was developed with conditions for preserving the positivity of the given data points. K.R.Tyada and Chand in ~\cite{Tyada2021}, introduced a smooth RCTFIF in literature for the interpolation of periodic data. They studied the constrained interpolation, which is sufficient for preserving data positivity.
	
	\par In recent years, zippers have been used in fractal interpolation functions. The presence of a binary vector called a signature gives more flexibility to the FIFs. Aseev et al. ~\cite{Aseev2003}, introduced the zipper by generalising the IFS using a binary vector called the zipper. Chand et al in \cite{Chand2020} constructed the affine zipper fractal interpolation function and derived the basis for the space of these interpolants. They also showed how zipper fractal interpolation performed better than the classical fractal interpolation function. Zipper fractal interpolation functions with variable scalings were studied by Vijay and Chand in ~\cite{Chand2022}. In ~\cite{Jha2020}, the zipper quadratic fractal interpolation function was proposed with positivity preserving conditions.
	
	\par In this paper, we develop a novel method of rational cubcic trigonometric zipper fractal interpolation functions (RCTZFIFs) to preserve the positivity nature of the univariate periodic data set. The paper is organised as follows: Section \ref{sec:prerequisite} provides essential preliminaries on Zipper Fractal Interpolation Functions (ZFIFs), and the foundational framework for constructing ZFIFs. Section \ref{sec:construction} presents the detailed construction methodology for the proposed RCTZFIF. In Section \ref{sec:convergence}, we perform a convergence analysis of the interpolant. Section \ref{sec:positivity} discusses the positivity preserving property of RCTZFIFs, where we derive the conditions for the shape parameters and scaling factors. Section \ref{sec:numexp} provides a numerical illustrations of the theoretical results and effectiveness of the proposed scheme in preserving positivity. Finally, the paper concludes with a summary of key findings and outlines directions for future research.
	
	\section{Preliminaries of Fractal Interpolation Functions}
	\label{sec:prerequisite}
	We begin our exploration by laying the groundwork for fractal interpolation functions which are constructed based on the iterated function system.

	Let $\mathbb{N}_n = \{1,2,\dots,n\}$, and $\mathbb{N}_{n-1} = \{1,2,\dots,n-1\}$. Suppose we are given a strictly increasing sequence of real numbers $t_1 < t_2 < \cdots < t_n$ with associated data values $\{(t_i, f_i)\}_{i=1}^n \subset \mathbb{R}^2$. Define the global interval $I = [t_1, t_n]$ and subintervals $I_j = [t_j, t_{j+1}]$ for every $j \in \mathbb{N}_{n-1}$.
	
	For each subinterval, let $L_j : I \rightarrow I_j$ be a contraction mapping satisfying
	\[
	L_j(t_1) = t_j, \ L_j(t_n) = t_{j+1}, \ \text{and} \ |L_j(t) - L_j(t^*)| \leq \ell_j |t - t^*|, \ \ell_j \in [0,1).
	\]
	
	Next, define a family of vertical mappings $F_j : I \times \mathbb{R} \rightarrow \mathbb{R}$ satisfying
	\[
	|F_j(t, f) - F_j(t, f^*)| \leq |\lambda_j| |f - f^*|, \ \forall\, t \in I,\ f, f^* \in \mathbb{R}, \ \lambda_j \in (-1,1),
	\]
	and interpolating endpoint conditions:
	\begin{equation}\label{iterp_condi_2}
		F_j(t_1, f_1) = f_j, \ F_j(t_n, f_n) = f_{j+1}.
	\end{equation}
	
	We now define the iterated function system (IFS) maps
	\[
	W_j(t, f) = (L_j(t), F_j(t, f)) : I \times \mathbb{R} \to I_j \times \mathbb{R}, \ j \in \mathbb{N}_{n-1}.
	\]
	
	\begin{theorem}[Existence of FIF, cf. \cite{BarHar86}]
		The IFS $\{W_j\}_{j=1}^{n-1}$ admits a unique attractor $G$, which coincides with the graph of a continuous function $h^* : I \rightarrow \mathbb{R}$ that interpolates the given data, i.e., $h^*(t_i) = f_i$ for each $i \in N_n$. Moreover, let $\mathcal{C}(I) = \{h \in C(I) : h(t_1) = f_1,\ h(t_n) = f_n\}$ be the space of continuous functions satisfying the boundary interpolation conditions, equipped with the uniform norm. Define an operator $T$ on $\mathcal{C}(I)$ by
		$$ (Th)(t) := F_j\left(L_j^{-1}(t),\ h \circ L_j^{-1}(t)\right), \ t \in I_j.$$
		Then $T$ has a unique fixed point $h^*$ in $\mathcal{C}(I)$, referred to as the fractal interpolation function (FIF).
	\end{theorem}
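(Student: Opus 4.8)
The plan is to prove the statement by invoking the Banach fixed point theorem for the operator $T$, and then to identify the resulting fixed point with the graph-attractor of the IFS. First I would verify that the space $\mathcal{C}(I)$, endowed with the uniform norm $\norm{\cdot}_\infty$, is a complete metric space: it is a closed subset of the Banach space $(C(I), \norm{\cdot}_\infty)$, since the boundary constraints $h(t_1) = f_1$ and $h(t_n) = f_n$ are preserved under uniform limits. Hence it suffices to show that $T$ is a contraction self-map on $\mathcal{C}(I)$ in order to obtain a unique fixed point $h^*$.

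The first substantive step is to check that $T$ maps $\mathcal{C}(I)$ into itself, i.e. that $Th$ is a well-defined continuous function satisfying the two boundary conditions. On the interior of each subinterval $I_i$, the formula $(Th)(t) = F_i(L_i^{-1}(t),\, h(L_i^{-1}(t)))$ is continuous as a composition of the continuous maps $L_i^{-1}$, $h$, and $F_i$. The delicate point---and the one I expect to be the main obstacle---is continuity at the junction nodes $t_i$ shared by $I_{i-1}$ and $I_i$. Evaluating the two one-sided formulas there and using $L_i(t_1) = t_i$, $L_{i-1}(t_n) = t_i$ together with the endpoint interpolation conditions \eqref{iterp_condi_2}, I would show that the left-hand value $F_{i-1}(t_n, h(t_n)) = F_{i-1}(t_n, f_n) = f_i$ and the right-hand value $F_i(t_1, h(t_1)) = F_i(t_1, f_1) = f_i$ coincide; here the membership $h \in \mathcal{C}(I)$ is precisely what makes $h(t_1) = f_1$ and $h(t_n) = f_n$ available. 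The same evaluation at the outer nodes $t_1$ and $t_n$ yields $(Th)(t_1) = f_1$ and $(Th)(t_n) = f_n$, so $Th \in \mathcal{C}(I)$.

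Next I would establish the contraction estimate. For $h, g \in \mathcal{C}(I)$ and $t \in I_i$, the Lipschitz bound on the vertical maps gives
\[
|(Th)(t) - (Tg)(t)| = \bigl| F_i(L_i^{-1}(t), h(L_i^{-1}(t))) - F_i(L_i^{-1}(t), g(L_i^{-1}(t))) \bigr| \le |\lambda_i|\, |h(L_i^{-1}(t)) - g(L_i^{-1}(t))|.
\]
Taking the supremum over $I_i$ and then over $i$, and writing $\lambda_{\max} = \max_i |\lambda_i| < 1$, I obtain $\norm{Th - Tg}_\infty \le \lambda_{\max}\, \norm{h - g}_\infty$. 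Thus $T$ is a contraction, and the Banach fixed point theorem furnishes a unique $h^* \in \mathcal{C}(I)$ with $Th^* = h^*$.

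Finally, I would verify interpolation and identify the attractor. From the fixed-point relation on $I_i$, evaluating at $t = t_i$ gives $h^*(t_i) = F_i(t_1, h^*(t_1)) = F_i(t_1, f_1) = f_i$, and similarly $h^*(t_{i+1}) = f_{i+1}$; combined with the boundary conditions this yields $h^*(t_j) = f_j$ for every $j$. To connect $h^*$ with the IFS, I would observe that the fixed-point equation $h^* = Th^*$ is equivalent to the self-referential identity $\operatorname{graph}(h^*) = \bigcup_{i=1}^{n-1} W_i(\operatorname{graph}(h^*))$; since the contractivity of the $W_i$ guarantees a unique compact attractor $G$ via Hutchinson's operator on the space of compacta under the Hausdorff metric, uniqueness forces $G = \operatorname{graph}(h^*)$, completing the proof.
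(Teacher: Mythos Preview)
The paper does not supply its own proof of this theorem; it is stated in the preliminaries as a classical result taken from \cite{BarHar86}, so there is nothing in the paper to compare against. Your argument via the Read--Bajraktarevi\'c operator---completeness of $\mathcal{C}(I)$, well-definedness and continuity of $Th$ across the junction nodes using the join-up conditions~\eqref{iterp_condi_2}, the contraction estimate with ratio $\max_i|\lambda_i|<1$, and the identification of $\operatorname{graph}(h^*)$ with the IFS attractor---is exactly the standard proof from that reference and is correct. The only step you pass over lightly is the appeal to ``contractivity of the $W_i$'' for Hutchinson's theorem: under the stated hypotheses the maps $W_i$ need not be contractions in the Euclidean metric, and the classical fix (also in \cite{BarHar86}) is either to equip $I\times\mathbb{R}$ with a suitably weighted metric or, more directly, to argue that any compact invariant set for $\{W_i\}$ must be the graph of a fixed point of $T$ and then invoke the uniqueness you have already established.
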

	The construction of the FIF is based on an IFS composed of affine maps of the form:
	\begin{equation}
		L_j(t) = a_j t + e_j, \ F_j(t, f) = \lambda_j f + M_j(t), \ \text{for } j \in \mathbb{N}_{n-1}.
	\end{equation}
	
	In this setup, each $M_j : I \rightarrow \mathbb{R}$ is a continuous auxiliary function chosen to satisfy the interpolation constraints (\ref{iterp_condi_2}). The constants $\lambda_j$ are known as vertical scaling factors of the IFS, and they play a pivotal role in shaping the self-referential geometry of the interpolant. The use of a scaling vector adds an important degree of flexibility to FIFs, which makes them more versatile than classical interpolation methods. This extra flexibility helps adjust the shape of the interpolant and allows better control over smoothness and local geometric features, which is especially useful for preserving desired shapes in the given data.
	
	The existence of spline FIFs was first established by Barnsley~\cite{BarHar86}, using ideas from the calculus of fractal functions. They showed how the construct of smooth interpolants that pass through given data points can be achieved using a recursive method. Later, this idea was implemented for rational spline FIFs, as given in Theorem~\ref{existence_of_rationalFIF} (see also~\cite{CK2015}), which further improved the ability to build smooth and shape preserving fractal interpolants.
	
	\begin{theorem}[Existence of Rational Spline FIFs {\cite{CK2015}}]\label{existence_of_rationalFIF}
		Let $\{(t_i, f_i) : i \in \mathbb{N}_{n-1}\}$ be a strictly increasing data set. Define $L_j(t) = a_j t + b_j$ with 
		$a_j = \frac{t_{j+1} - t_j}{t_n - t_1}$ and $b_j = \frac{t_n t_j - t_1 t_{j+1}}{t_n - t_1}$, for all $j \in N_{n-1}$. Let $F_j(t, f) = \lambda_j f + M_j(t)$, where $M_j(t) = \frac{p_j(t)}{q_j(t)}$, with $\deg(p_j) = r$, $\deg(q_j) = s$, and $q_j(t) \neq 0$ on $[t_1, t_n]$. Suppose $|\lambda_j| < a_j^p$ for some integer $p \geq 0$. For $m = 1, 2, \dots, p$, define
		\[
		F_{j,m}(t, f) = \frac{\lambda_j f + M_j^{(m)}(t)}{a_j^m}, \ 
		f_{1,m} = \frac{M_1^{(m)}(t_1)}{a_1^m - \lambda_1},\
		f_{n,m} = \frac{M_{n-1}^{(m)}(t_n)}{a_{n-1}^m - \lambda_{n-1}}.
		\]
		If $F_{k,m}(t_n, f_{n,m}) = F_{k+1,m}(t_1, f_{1,m})$ ,$\forall k = 1, \dots, n-2$ and $m = 1, \dots, p$, then the IFS $\{\omega_i(t,f) = (L_j(t), F_j(t,f))\}$ determines a rational FIF $\Phi \in \mathcal{C}^p[t_1, t_n]$.
	\end{theorem}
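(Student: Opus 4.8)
The plan is to construct, for each derivative order $m \in \{0,1,\dots,p\}$, an auxiliary IFS whose attractor is the graph of a candidate function $\Phi_m$, and then to prove by induction that $\Phi_m$ is precisely the $m$-th derivative of the base FIF $\Phi = \Phi_0$. The motivation comes from the self-referential equation $\Phi(L_i(x)) = \alpha_i \Phi(x) + M_i(x)$ on $[x_1,x_n]$: if $\Phi$ were already $m$-times differentiable, then differentiating $m$ times and using $L_i'(x) = a_i$ would give $a_i^m \Phi^{(m)}(L_i(x)) = \alpha_i \Phi^{(m)}(x) + M_i^{(m)}(x)$, i.e. $\Phi^{(m)}(L_i(x)) = F_{i,m}(x, \Phi^{(m)}(x))$. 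This identifies the maps $(L_i(x), F_{i,m}(x,f))$ as the natural IFS governing the $m$-th derivative, and it is exactly these maps that appear in the statement.

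First I would verify that each auxiliary system is admissible. The vertical contraction factor of the $m$-th system is $|\alpha_i|/a_i^m$; since $a_i = (x_{i+1}-x_i)/(x_n-x_1) \in (0,1)$, we have $a_i^m \ge a_i^p$ for every $m \le p$, so the hypothesis $|\alpha_i| < a_i^p$ yields $|\alpha_i|/a_i^m \le |\alpha_i|/a_i^p < 1$. Thus each $(L_i, F_{i,m})$ is a contractive IFS and, by the FIF existence theorem stated above, admits a unique attractor. Next I would check that this attractor is the graph of a continuous function $\Phi_m$. Because $L_1(x_1) = x_1$ and $L_{n-1}(x_n) = x_n$, the endpoint values of $\Phi_m$ are forced to be the fixed points $f_{1,m}$ and $f_{n,m}$ of $F_{1,m}(x_1,\cdot)$ and $F_{n-1,m}(x_n,\cdot)$, which is exactly how these constants are defined. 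At each interior knot $x_{i+1}$, the identities $L_i(x_n) = x_{i+1} = L_{i+1}(x_1)$ produce two candidate values, $F_{i,m}(x_n, f_{n,m})$ from the left piece and $F_{i+1,m}(x_1, f_{1,m})$ from the right piece; the assumed join-up conditions force these to coincide, so $\Phi_m$ is well-defined and continuous on $[x_1,x_n]$.

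The heart of the proof is the induction establishing $\Phi_m = \Phi^{(m)}$. I would work with the Read--Bajraktarevi\'c operators $T_m$ associated with the $m$-th system, each a contraction on the corresponding space of continuous functions carrying the prescribed endpoint values, whose fixed point is $\Phi_m$. The key structural fact is that differentiation intertwines these operators: for sufficiently smooth $g$, a direct computation on the interior of each $I_i$ (using $(L_i^{-1})'(t) = 1/a_i$) gives $(T_0 g)^{(m)} = T_m(g^{(m)})$, while the matching conditions guarantee that the resulting derivative extends continuously across the knots. Starting from a smooth seed $g_0$ and iterating, $g_k = T_0^k g_0 \to \Phi$ uniformly, and by the intertwining relation $g_k^{(m)} = T_m^k g_0^{(m)} \to \Phi_m$ uniformly for each $m \le p$.

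Finally I would invoke the classical theorem on uniform convergence of derivatives: if $g_k \to \Phi$ uniformly and $g_k' \to \Phi_1$ uniformly, then $\Phi$ is differentiable with $\Phi' = \Phi_1$; applying this successively for $m = 1, \dots, p$ yields $\Phi^{(m)} = \Phi_m$ and hence $\Phi \in \mathcal{C}^p[x_1,x_n]$. I expect the main obstacle to be precisely this last interchange-of-limits step together with the knot analysis: the formal differentiation of the functional equation is valid only on the open subintervals, and the genuine content of the join-up conditions is that they repair the derivative across the knots so that the uniformly convergent derivative sequences have continuous limits. Careful bookkeeping of the contraction constants $|\alpha_i|/a_i^m < 1$, ensuring that every $T_m$ is a well-defined contraction, is what makes the induction go through cleanly.
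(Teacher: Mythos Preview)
The paper does not supply its own proof of this theorem; it is quoted verbatim as a known result from the cited reference \cite{CK2015}, which in turn adapts the original Barnsley--Harrington argument \cite{BarHar86} to the rational setting. There is therefore no in-paper proof to compare against.

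That said, your proposal is a faithful and correct outline of the standard proof one finds in those references: build the auxiliary IFS $\{(L_i,F_{i,m})\}$ for each $m\le p$, use $|\alpha_i|<a_i^p\le a_i^m$ to get contractivity, invoke the join-up hypotheses $F_{i,m}(x_n,f_{n,m})=F_{i+1,m}(x_1,f_{1,m})$ to obtain a continuous fixed point $\Phi_m$, and then identify $\Phi_m$ with $\Phi^{(m)}$ via the intertwining relation $(T_0 g)^{(m)}=T_m(g^{(m)})$ together with the classical theorem on uniform limits of derivatives. This is exactly the route taken in the literature, so your sketch matches the intended argument.
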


	\section{Construction of Rational Cubic Trigonometric Zipper Fractal Interpolation Functions (RCTZFIFs)} \label{sec:construction}
	In classical splines, each segment maps the main interval to smaller parts in the same direction using positive scaling. But if we also allow reverse mappings (negative scaling), we can build more flexible curves. To handle this, we use a binary signature vector that tells whether each segment follows the normal or reversed direction like a zipper joining alternating parts. This idea leads to the construction of zipper rational cubic splines (ZRCSs), which offer greater control and flexibility in shaping curves. In this section, we emphasise the construction of RCTZFIF. 
	
	Let $\{(t_i, f_i) \in I\times K : i \in \mathbb{N}_n\}$ be the given data points of the original function $\psi$ such that $t_i < t_{i+1}, \forall i \in \mathbb{N}_{n-1}$. Let us consider the signature $\epsilon = (\epsilon_1, \epsilon_2, \dots, \epsilon_{n-1}) \in \{0,1\}^{n-1}$. Let $I = [t_1, t_n]$ and $I_j = [t_j, t_{j+1}] , j \in \mathbb{N}_{n-1}$. Let $L_j : I \rightarrow I_j$, be the affine map given by $L_{j,\epsilon}(t) = a_j t + b_j, j \in \mathbb{N}_{n-1}$ such that 
	
	\begin{equation*}
		L_{j,\epsilon}(t_1) = t_{j+\epsilon_j}, L_j(t_n) = t_{j+1-\epsilon_j},
	\end{equation*}
	where $a_j = \frac{t_{j+1-\epsilon_j}-t_{j+\epsilon_j}}{t_n - t_1}$, and $b_j = \frac{t_n t_{j+\epsilon_j}-t_1 t_{j+1-\epsilon_j}}{t_n - t_1}$. Let $F_{j,\epsilon} : I \times K \rightarrow \mathbb{R}$ be a function defined as 
	\begin{equation}\label{eq_Fj}
		F_{j,\epsilon}(t, f) = \lambda_j + M_{j,\epsilon}(t),
	\end{equation}
	
	\noindent where $M_{j,\epsilon} : I \rightarrow \mathbb{R}$ is a rational function such that $
	M_{j,\epsilon}(t) = p_{j,\epsilon}(t)/q_{j,\epsilon}(t)$, where $p_{j,\epsilon}(t)$ and $q_{j,\epsilon}(t)$ are cubic trigonometric polynomials such that $q_{j,\epsilon}(t) \neq 0, \forall t\in[t_1, t_n]$ and $|\lambda_j| < a_j, j\in \mathbb{N}_{n-1}$ is the vertical scaling factor. 
	
	\noindent Let $F_{j,\epsilon}^{(1)}(t,d) = \frac{\lambda_j d + M_{j,\epsilon}^{(1)}(t)}{a_j}$, where, $M_{j,\epsilon}^{(1)}(t)$ is the first order derivative of $M_{j,\epsilon}(t), t\in[t_1, t_n], j \in \mathbb{N}_{n-1}$. $F_{j,\epsilon}(t, f)$ and $F_{j,\epsilon}^{(1)}(t,d)$ satisfy the following:
	\begin{align*}
		F_{j,\epsilon}(t_1, f_1) = f_{j+\epsilon_j}, F_{j,\epsilon}(t_n, f_n) = f_{j+1-\epsilon_j}, F_{j,\epsilon}^{(1)}(t_1, d_1) = d_{j+\epsilon_j}, F_{j,\epsilon}^{(1)}(t_n, d_n) = d_{j+1-\epsilon_j}.
	\end{align*}
	
	\noindent By Theorem \ref{existence_of_rationalFIF}, the fixed point of the IFS $\{I\times K; (L_{j,\epsilon}(t), F_{j,\epsilon}(t,f)), j\in \mathbb{N}_{n-1}\}$ will be a graph of a $\mathcal{C}^1-$rational cubic trigonometric Fractal Interpolation Function. As per  Equation (\ref{eq_Fj}), the RCTZFIF will be defined as 
	\begin{equation}\label{eq_phie}
		\phi_\epsilon(L_j(t)) = \lambda_j \phi_\epsilon(t) + M_{j,\epsilon}(t),
	\end{equation}
	where $M_{j,\epsilon}(t) = \frac{p_{j,\epsilon}(\theta)}{q_{j,\epsilon}(\theta)}, \theta = \frac{\pi}{2}\frac{t-t_1}{t_n - t_1}$, $
		p_{j,\epsilon}(\theta) = (1-\sin\theta)^3 U_{j,\epsilon} + \sin\theta(1-\sin\theta)^2V_{j,\epsilon} + \cos\theta(1-\cos\theta)^2W_{j,\epsilon} + (1-\cos\theta)^3X_{j,\epsilon}$, and $q_{j, \epsilon}(\theta)  =  (1-\sin\theta)^3 \alpha_{j,\epsilon} + \sin\theta(1-\sin\theta)^2\beta_{j,\epsilon} + \cos\theta(1-\cos\theta)^2\gamma_{j,\epsilon} + (1-\cos\theta)^3\delta_{j,\epsilon}$. Furthermore, $\phi_\epsilon$ satisfies the following $\mathcal{C}^1$-continuity conditions:
	\begin{align*}
		\phi_\epsilon(L_j(t_1)) = f_{j+\epsilon_j}, \ \phi_\epsilon(L_j(t_n)) = f_{j+1-\epsilon_j}, \  \phi_\epsilon'(L_j(t_1)) = d_{j+\epsilon_j}, \
		\phi_\epsilon'(L_j(t_n)) = d_{j+1-\epsilon_j}.
	\end{align*}
	
	\noindent At $t=t_1$, $\theta = 0$. Then $\phi_\epsilon(L_j(t_1)) = \lambda_jf_1 + M_{j,\epsilon}(t_1) = \lambda_jf_1 + \frac{U_{j, \epsilon}}{\alpha_{j, \epsilon}} = f_{j+\epsilon_j}$. Therefore, we get 
	\[
	U_{j,\epsilon} = \begin{cases}
		\alpha_{j,\epsilon}(f_j-\lambda_jf_1), & \epsilon_j = 0\\
		\alpha_{j,\epsilon}(f_{j+1}-\lambda_jf_1), & \epsilon_j = 1.
	\end{cases}
	\]
	At $t=t_n$, $\theta = \pi/2$. This implies
	$\phi_\epsilon(L_i(t_n))=\lambda_j\phi_\epsilon(t_n) + M_{j,\epsilon}(t_n) = \lambda_jf_n + \frac{X_{j,\epsilon}}{\delta_{j,\epsilon}} = f_{j+1-\epsilon_j}$. Therefore, 
	\[
	X_{j,\epsilon} = \begin{cases}
		\delta_{j,\epsilon}(f_{j+1} - \lambda_jf_n), & \epsilon_j = 0\\
		\delta_{j,\epsilon}(f_{j} - \lambda_jf_n), & \epsilon_j = 1.
	\end{cases}
	\]
	Similarly, at $t=t_1$, $\phi_\epsilon'(L_j(t_1)) = d_{j+\epsilon_j}$. 
	Let $d_j^* = a_jd_{j+\epsilon_j}-\lambda_jd_1$ then, 
	\[
	V_{j,\epsilon} = \begin{cases}
		\beta_{j,\epsilon}(f_j-\lambda_jf_1)+\frac{2(t_n-t_1)\alpha_{j,\epsilon}(a_jd_j-\lambda_jd_1)}{\pi}, & \epsilon_j = 0\\
		\beta_{j,\epsilon}(f_{j+1}-\lambda_jf_1)+\frac{2(t_n-t_1)\alpha_{j,\epsilon}(a_jd_{j+1}-\lambda_jd_1)}{\pi}, & \epsilon_j = 1.
	\end{cases}
	\]
	Finally, when $t=t_n$ we get
	\[
	W_{j,\epsilon} = \begin{cases}
		\gamma_{j,\epsilon}(f_{i+1} - \lambda_j f_n) - \frac{2(t_n - t_1)\delta_{j,\epsilon}(a_jd_{j+1} - \lambda_jd_n)}{\pi}, & \epsilon = 0\\
		\gamma_{j,\epsilon}(f_{i} - \lambda_j f_n) - \frac{2(t_n - t_1)\delta_{j,\epsilon}(a_jd_{j} - \lambda_jd_n)}{\pi}, & \epsilon = 1.
	\end{cases}
	\]
	
	\noindent By substituting $U_{j,\epsilon}, V_{j,\epsilon}, W_{j,\epsilon}, \text{ and } X_{j,\epsilon}$ in the equation (\ref{eq_phie}), we get the required well-defined $\mathcal{C}^1$-rational cubic trigonometric zipper fractal interpolation function. For computing the derivatives from the given data, we have used arithmetic mean method (AMM).

	\section{Convergence Analysis}
	\label{sec:convergence}
	In this section, we analyse the convergence behaviour of the rational cubic trigonometric zipper fractal interpolation function (RCTZFIF) $\phi_\epsilon$ toward the original data-generating function $\psi$. To do so, we establish an upper bound on the approximation error $\|\phi_\epsilon - \psi\|_\infty$ using the triangle inequality:
	\begin{align}
		\|\phi_\epsilon - \psi\|_\infty \leq \|\phi_\epsilon - \phi\|_\infty + \|\phi - \psi\|_\infty,
	\end{align}
	where $\phi$ denotes the rational cubic trigonometric FIF. \cite{chand2015positivity}. The first term $\|\phi_\epsilon - \phi\|_\infty $ gives the error bound between RCTZFIF and RCTFIF, while the second term accounts for the error between the classical spline relative to the target function $\psi$, which is the RCTFIF.
	
	We now state the following result from~\cite{chand2015positivity}, which gives the error bound for the classical spline interpolation $\psi$ and the RCTFIF $\phi$:
	\begin{theorem}
		\label{thm:rctfif}
		(\cite{chand2015positivity}) Let $\psi \in C^3[t_1, t_n]$ be the original function that generates $\{(t_i, f_i),  i \in \mathbb{N}_{n}\}$ and let $\phi$ be the RCTFIF $\in \mathcal{C}^1[t_1, t_n]$. Let $d_i, i\in \mathbb{N}_n$ be the bounded first-order derivative at knot $t_i, i \in \mathbb{N}_n$. Let $|\lambda|_\infty = max\{|\lambda_j|,  j \in \mathbb{N}_{n-1}\}$ and the shape parameters $\alpha_j, \beta_j, \gamma_j, \delta_j,  j \in \mathbb{N}_{n-1}$ are non-negative with $\beta_j \geq\alpha_j, \gamma_j \geq \delta_j$. Then 
		\begin{align}
			||\psi - \phi||_\infty \leq \frac{1}{2}||\psi^{(3)}||_\infty h^3 c + \frac{|\lambda|_\infty}{1-|\lambda|_\infty}(E(h) + E^*(h)),
		\end{align}
		where $E(h) = ||\psi||_\infty + \frac{4h}{\pi}E_1 , E^*(h) = F + \frac{4h}{\pi}E_2, E_1 = \max\limits_{1 \le j \le n-1} \{d_j\}, F = max\{|f_1|, |f_n|\}$,\ $E_2 = max\{|d_1|, |d_n|\}$ and c is as defined in Proposition 2 of \cite{chand2015positivity}.
	\end{theorem}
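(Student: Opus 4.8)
\noindent The plan is to estimate $\|\psi - \phi\|_\infty$ by interposing the classical (non-fractal) rational cubic trigonometric Hermite spline $\tilde\phi$, obtained from the very same construction with all scaling factors set to zero. The triangle inequality then splits the error as
\[
\|\psi - \phi\|_\infty \leq \|\psi - \tilde\phi\|_\infty + \|\tilde\phi - \phi\|_\infty,
\]
and it remains to identify the first summand with the $\tfrac{1}{2}\|\psi^{(3)}\|_\infty h^3 c$ term (a classical interpolation error) and the second with the $\tfrac{|\lambda|_\infty}{1-|\lambda|_\infty}(E(h)+E^*(h))$ term (a fractal perturbation).

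For $\|\psi - \tilde\phi\|_\infty$ I would appeal directly to the classical theory of the rational cubic trigonometric interpolant. Since $\psi \in C^3[t_1,t_n]$ and $\tilde\phi$ reproduces $\psi$ in value and first derivative (the $d_i$, here supplied by the AMM) at every knot, a Taylor-remainder / Peano-kernel estimate on each subinterval yields third-order accuracy, giving exactly $\tfrac{1}{2}\|\psi^{(3)}\|_\infty h^3 c$ with $c$ the shape-parameter constant of Proposition~2 of \cite{chand2015positivity}. This is where the hypotheses $\alpha_j,\beta_j,\gamma_j,\delta_j \geq 0$ with $\beta_j \geq \alpha_j$, $\gamma_j \geq \delta_j$ first matter: they keep $q_j(\theta)$ bounded away from zero on $[0,\pi/2]$ and force each rational block to be dominated by a convex combination of its coefficients, which is what makes $c$ finite.

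For $\|\tilde\phi - \phi\|_\infty$ I would exploit the self-referential functional equation of the form \eqref{eq_phie}. The interpolant $\tilde\phi$ satisfies the same equation with $\lambda_j = 0$ and the correspondingly simplified coefficients, so subtracting and writing $\phi(L_j(t)) - \tilde\phi(L_j(t)) = \lambda_j(\phi(t)-\tilde\phi(t)) + \lambda_j\tilde\phi(t) + (M_j(t)-\tilde M_j(t))$ and taking suprema gives
\[
\|\phi - \tilde\phi\|_\infty \leq |\lambda|_\infty\|\phi - \tilde\phi\|_\infty + |\lambda|_\infty\|\tilde\phi\|_\infty + \max_j\|M_j - \tilde M_j\|_\infty.
\]
Because every coefficient difference $M_j - \tilde M_j$ carries a common factor $\lambda_j$, one has $\max_j\|M_j - \tilde M_j\|_\infty \leq |\lambda|_\infty\, C$, and solving the inequality for $\|\phi-\tilde\phi\|_\infty$ produces the contraction factor $\tfrac{|\lambda|_\infty}{1-|\lambda|_\infty}$. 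The residual $\|\tilde\phi\|_\infty$ is then bounded by $E(h) = \|\psi\|_\infty + \tfrac{4h}{\pi}E_1$, whereas the coefficient factor $C$, which gathers the boundary data $f_1,f_n,d_1,d_n$ together with the trigonometric normalisation $\tfrac{2(t_n-t_1)}{\pi}$ entering $V_j$ and $W_j$, is bounded by $E^*(h) = F + \tfrac{4h}{\pi}E_2$.

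I expect the second term to be the main obstacle: one must cleanly factor $|\lambda|_\infty$ out of the coefficient differences $M_j - \tilde M_j$ and then bound the resulting rational expressions uniformly in $\theta \in [0,\pi/2]$. This is precisely where the shape-parameter inequalities are indispensable, since they guarantee that each rational block is a bounded, convex-combination-type functional of $U_j,\dots,X_j$, so that the inhomogeneous part of the functional equation is dominated by $E(h)+E^*(h)$. Adding the two estimates then delivers the stated bound.
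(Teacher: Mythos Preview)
The paper does not prove this theorem: it is quoted verbatim as a result of \cite{chand2015positivity} and used as a black box in Section~\ref{sec:convergence}. There is therefore no in-paper argument to compare your proposal against.

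For what it is worth, your outline is the standard route for such estimates in the fractal-interpolation literature and almost certainly coincides with the proof in the cited source: interpose the classical ($\lambda_j\equiv 0$) rational cubic trigonometric spline $\tilde\phi$, bound $\|\psi-\tilde\phi\|_\infty$ by the third-order Peano-kernel estimate (this is exactly the content of Proposition~2 of \cite{chand2015positivity}, whence the constant $c$), and bound $\|\phi-\tilde\phi\|_\infty$ by subtracting the two functional equations, factoring $|\lambda|_\infty$ out of the coefficient differences, and solving the resulting contraction inequality. Your identification of $E(h)$ with a uniform bound on $\tilde\phi$ and of $E^*(h)$ with the boundary-data contribution carried by $M_j-\tilde M_j$ is correct. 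The one place to be careful is the claim $\|\tilde\phi\|_\infty \le \|\psi\|_\infty + \tfrac{4h}{\pi}E_1$: this is not automatic and relies on the shape-parameter hypotheses $\beta_j\ge\alpha_j$, $\gamma_j\ge\delta_j$ to ensure the rational basis functions form a partition-of-unity-like system with the right normalisation (the $\tfrac{4}{\pi}$ comes from the derivative of the trigonometric basis at the endpoints); you flag this correctly but would need to cite or reproduce the relevant basis-function bounds rather than assert them.
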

	
	\noindent Now, to approximate the upper bound for $||\phi_\epsilon - \phi||_\infty$, consider
	\begin{align*}
		||\phi_\epsilon - \phi||_\infty = ||(\lambda_j \phi_\epsilon - \lambda_j \phi)+ M_{j,\epsilon} - M_j||_\infty \leq |\lambda|_\infty || \phi_\epsilon - \phi||_\infty + ||M_{j,\epsilon} - M_j||_\infty.
	\end{align*}
	Therefore, we get $||\phi_\epsilon - \phi||_\infty \leq \frac{||M_{j,\epsilon} - M_j||_\infty}{1-|\lambda|_\infty}$. Now
	\[
	\begin{aligned}
		\left\lVert M_{j,\epsilon} - M_j \right\rVert_\infty 
		= & \left\lVert \frac{\alpha_{j,\epsilon} B_0}{q_{j,\epsilon}(\theta)}[f_{j+\epsilon_j} - f_j] 
		+ \frac{\beta_{j,\epsilon} B_1}{q_{j,\epsilon}(\theta)}[f_{j+\epsilon_j} - f_j] 
		+ \frac{\gamma_{j,\epsilon} B_2}{q_{j,\epsilon}(\theta)}[f_{j+1-\epsilon_i} - f_{j+1}] \right. \\
		&\ \left. + \frac{\delta_{j,\epsilon} B_3}{q_{j,\epsilon}(\theta)}[f_{j+1-\epsilon_j} - f_{j+1}]
		+ \frac{2l\alpha_{j,\epsilon} a_j B_1}{\pi q_{j,\epsilon}(\theta)}[d_{j+\epsilon_j} - d_j] \right.\\
		& \left. - \frac{2l\delta_{j,\epsilon} a_j B_2}{\pi q_{j,\epsilon}(\theta)}[d_{j+1-\epsilon_j} - d_{j+1}], \right\rVert_\infty,
	\end{aligned}
	\]
	\noindent where $B_0 = (1-\sin\theta)^3, B_1 = \sin\theta(1-\sin\theta)^2, B_2 = \cos(\theta)(1-\cos(\theta))^2, B_3 = (1-\cos(\theta))^3, \theta = \frac{\pi}{2}\left(\frac{t-t_1}{l}\right), \text{and}\ l = t_n - t_1$. Assume that $\xi = \max\limits_{1 \le j \le n-1} |q_j(\theta)|,\ \norm{\epsilon}_\infty = \max\limits_{1 \le j \le n-1} |\epsilon_j|$. Then
	\begin{align*}
		\left\lVert M_{j,\epsilon} - M_j \right\rVert_\infty 
		&\leq \frac{\alpha_{j,\epsilon} B_0}{\xi} \norm{f_{j+\epsilon_j} - f_j}_\infty 
		+ \frac{\beta_{j,\epsilon} B_1}{\xi} \norm{f_{j+\epsilon_j} - f_j}_\infty \\
		&\ + \frac{\gamma_{j,\epsilon} B_2}{\xi} \norm{f_{j+1 - \epsilon_j} - f_{j+1}}_\infty 
		+ \frac{\delta_{j,\epsilon} B_3}{\xi} \norm{f_{j+1 - \epsilon_j} - f_{j+1}}_\infty \\
		&\ + \frac{2l \alpha_{j,\epsilon} a_j B_1}{\pi \xi} \norm{d_{j + \epsilon_j} - d_j}_\infty 
		- \frac{2l \delta_{j,\epsilon} a_j B_2}{\pi \xi} \norm{d_{j+1 - \epsilon_j} - d_{j+1}}_\infty\\
		&\leq \frac{\alpha_{j,\epsilon} B_0+\beta_{j,\epsilon} B_1+\gamma_{j,\epsilon} B_2+\delta_{j,\epsilon} B_3}{\xi}\norm{f_{j+\epsilon_j} - f_j}_\infty \norm{\epsilon}_\infty \\
		&\ + \frac{2la_j}{\pi\xi}\norm{d_{j+1}-d_j}_\infty \norm{\epsilon}_\infty[\alpha_{j,\epsilon}B_1 - \delta_{j,\epsilon}B_2].
	\end{align*}
	
	\noindent Since $h_j = la_j$, and by Theorem \ref{thm:rctfif}, we get $\norm{M_{j,\epsilon} - M_j}_\infty \leq \norm{\epsilon}_\infty \left(C\Phi + \frac{4h_j}{\pi}\eta \right)$, where $\Phi = \max\limits_{1 \le j \le n-1} |f_j|, \eta = \max\limits_{1 \le j \le n-1} |d_j|$. Hence $\norm{\phi_\epsilon - \phi} \leq \frac{\norm{\epsilon}_\infty}{1-|\lambda|_\infty}\left(C\Phi + \frac{4h_j}{\pi}\eta \right)$.
	
	\noindent Combining the above estimates, we get the following:  
	\begin{align*}
		\norm{\phi_\epsilon - \psi}_\infty &\leq \norm{\phi_\epsilon - \phi}_\infty + \norm{\phi - \psi}_\infty\\
		&\leq \frac{\norm{\epsilon}_\infty}{1-|\lambda|_\infty}\left(C\Phi + \frac{4h}{\pi}\eta \right) + 
		\frac{1}{2}\norm{\psi^{(3)}}_\infty h^3c + \frac{|\lambda|_\infty}{1-|\lambda|_\infty}\left(E(h) + E(h^*)\right),
	\end{align*}
	where $h = \displaystyle \mathop{\max}_{1 \leq j \leq n-1} |h_j|$, and $E(h) \text{ and } E^*(h)$ are as defined in Theorem \ref{thm:rctfif}.
	
	\noindent We summarize this whole process in the following theorem.
	\begin{theorem}
		\label{thm:convergence}
		Let $\phi_\epsilon$ be the $\mathcal{C}^1$-continuous RCTZFIF with corresponding signature $\epsilon = (\epsilon_1, \epsilon_2, \dots, \epsilon_{n-1}) \in \{0,1\}^{n-1}$. Let $\psi \in \mathcal{C}^3[a,b]$ be the data generating function, and let the generated data points be $\{(t_i, f_i),  i \in \mathbb{N}_{n}\}$. Assume that for each $j \in \mathbb{N}_{n-1}$, the first-order derivative $d_j$ at the knot $t_j$ is bounded. Define $|\lambda|_\infty := \max\{|\lambda_j| : j \in \mathbb{N}_{n-1}\}$. Suppose the shape parameters $\alpha_{j,\epsilon}, \beta_{j,\epsilon}, \gamma_{j,\epsilon}, \delta_{j,\epsilon}$ are all non-negative for each $j \in \mathbb{N}_{n-1}$, and satisfy the conditions $\beta_{j,\epsilon} \geq \alpha_{j,\epsilon}$ and $\gamma_{j,\epsilon} \geq \delta_{j,\epsilon}$. Then 
		\begin{align*}
			\norm{\phi_\epsilon - \psi}_\infty &\leq \norm{\phi_\epsilon - \phi}_\infty + \norm{\phi - \psi}_\infty\\
			&\leq \frac{\norm{\epsilon}_\infty}{1-|\lambda|_\infty}\left(C\Phi + \frac{4h}{\pi}\eta \right) + 
			\frac{1}{2}\norm{\psi^{(3)}}_\infty h^3c + \frac{|\lambda|_\infty}{1-|\lambda|_\infty}\left(E(h) + E(h^*)\right),
		\end{align*}
		where $\norm{\epsilon}_\infty = \max\limits_{1 \le j \le n-1} |\epsilon_j|, \
		\norm{\lambda}_\infty = \max\limits_{1 \le j \le n-1} |\lambda_j|,\ C = \frac{q_{j,\epsilon}(\theta)}{\xi}, \xi = \max\limits_{1 \le j \le n-1} |q_{j,\epsilon}(\theta)|, \forall j \in \mathbb{N}_{n-1}, \ \Phi = \max\limits_{1 \le j \le n-1} |f_j|$, and $\eta = \max\limits_{1 \le j \le n-1} |d_j|$. $E(h) \text{ and } E(h^*) \text{ are same as defined in Theorem }\ref{thm:rctfif}$.
	\end{theorem}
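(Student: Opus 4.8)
The plan is to bound the total error $\norm{\phi_\epsilon - \psi}_\infty$ through the intermediate classical interpolant $\phi$ (the RCTFIF) via the triangle inequality, writing $\norm{\phi_\epsilon - \psi}_\infty \leq \norm{\phi_\epsilon - \phi}_\infty + \norm{\phi - \psi}_\infty$. The second summand is handled immediately by invoking Theorem~\ref{thm:rctfif}, which supplies the estimate $\frac{1}{2}\norm{\psi^{(3)}}_\infty h^3 c + \frac{|\lambda|_\infty}{1-|\lambda|_\infty}(E(h) + E^*(h))$ under the stated hypotheses: $\psi \in \mathcal{C}^3$, boundedness of the derivatives $d_j$, and the non-negativity together with the ordering of the shape parameters. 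All the genuine work therefore lies in the first summand, the zipper-to-classical discrepancy $\norm{\phi_\epsilon - \phi}_\infty$.

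For that term I would exploit that both $\phi_\epsilon$ and $\phi$ are fixed points of Read--Bajraktarevic operators sharing the same vertical scalings $\lambda_j$ and differing only in their vertical components $M_{j,\epsilon}$ versus $M_j$. From the self-referential identity~\eqref{eq_phie} and its classical counterpart $\phi(L_j(t)) = \lambda_j \phi(t) + M_j(t)$, subtracting gives, for any $s = L_j(t) \in I_j$,
\begin{align*}
    |\phi_\epsilon(s) - \phi(s)| \leq |\lambda_j|\,|\phi_\epsilon(t) - \phi(t)| + |M_{j,\epsilon}(t) - M_j(t)|.
\end{align*}
Taking the supremum over $s \in I$ and using $|\lambda|_\infty < 1$ yields the contraction-style estimate $\norm{\phi_\epsilon - \phi}_\infty \leq \frac{\norm{M_{j,\epsilon} - M_j}_\infty}{1 - |\lambda|_\infty}$, reducing the whole problem to controlling the single deterministic quantity $\norm{M_{j,\epsilon} - M_j}_\infty$.

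The core of the argument is then the estimation of $\norm{M_{j,\epsilon} - M_j}_\infty$, which I expect to be the main obstacle. Since $M_{j,\epsilon}$ and $M_j$ are rational trigonometric functions built from the \emph{same} basis $B_0, B_1, B_2, B_3$ and the same denominator structure, the difference of their numerators collapses to the coefficient gaps $U_{j,\epsilon} - U_j$, $V_{j,\epsilon} - V_j$, and so on. The crucial feature is that each such gap is driven by the signature through the quantities $f_{j+\epsilon_j} - f_j$ and $f_{j+1-\epsilon_j} - f_{j+1}$ (together with the analogous derivative differences): these vanish when $\epsilon_j = 0$ and equal full data gaps when $\epsilon_j = 1$, which is precisely what produces the factor $\norm{\epsilon}_\infty$ in the final bound. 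I would then bound each basis function $B_0,\dots,B_3$ on $[0,\pi/2]$, control the denominator via $C = q_{j,\epsilon}(\theta)/\xi$ with $\xi = \max_j |q_{j,\epsilon}(\theta)|$, and use the non-negativity together with $\beta_{j,\epsilon} \geq \alpha_{j,\epsilon}$ and $\gamma_{j,\epsilon} \geq \delta_{j,\epsilon}$ to collapse the trigonometric combination into the compact form $C\Phi + \frac{4h_j}{\pi}\eta$, where $\Phi = \max_j |f_j|$ and $\eta = \max_j |d_j|$.

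The delicate points in this last step are twofold: keeping the denominator bounded away from zero (guaranteed by $q_{j,\epsilon} \neq 0$ on $[t_1,t_n]$ and the shape-parameter hypotheses) so that the rational quotient admits a genuine upper bound, and correctly handling the sign of the derivative contribution $-\frac{2l\,\delta_{j,\epsilon} a_j B_2}{\pi}[d_{j+1-\epsilon_j} - d_{j+1}]$, which enters with a minus sign and must be bounded in absolute value rather than naively dropped. Once $\norm{M_{j,\epsilon} - M_j}_\infty \leq \norm{\epsilon}_\infty\bigl(C\Phi + \frac{4h_j}{\pi}\eta\bigr)$ is established, substituting back into the contraction estimate and combining with Theorem~\ref{thm:rctfif} delivers the stated bound, with $h = \max_j |h_j|$ and $h_j = l a_j$.
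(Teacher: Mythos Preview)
Your proposal is correct and follows essentially the same route as the paper: split via the triangle inequality through the classical RCTFIF $\phi$, invoke Theorem~\ref{thm:rctfif} for $\norm{\phi-\psi}_\infty$, derive the contraction estimate $\norm{\phi_\epsilon-\phi}_\infty \le \norm{M_{j,\epsilon}-M_j}_\infty/(1-|\lambda|_\infty)$ from the two self-referential equations, and then bound $\norm{M_{j,\epsilon}-M_j}_\infty$ by expanding the numerator in the trigonometric basis $B_0,\dots,B_3$ so that only the signature-driven gaps $f_{j+\epsilon_j}-f_j$, $d_{j+\epsilon_j}-d_j$, etc., survive. Your identification of the two delicate points (denominator control and the sign of the $B_2$ derivative term) is apt; the paper treats them in exactly the same spirit.
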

	
	\section{Positivity preserving RCTZFIFs}
	\label{sec:positivity}
	Preserving the shape of the interpolant, especially ensuring positivity, is essential in various scientific and engineering contexts, where negative values lack physical interpretation. This is particularly relevant in fields such as population modelling, chemical concentration analysis, and probability theory, where the quantities being interpolated are inherently nonnegative. This section is devoted to examining the criteria that ensure the constructed RCTZFIF maintains the positivity inherent in the given interpolation data.
	
	Let $\{(t_i, f_i) : i \in \mathbb{N}_n\}$ be the positive data set, where $t_1 < t_2 < \cdots < t_n$ and $f_i > 0$ for each $i \in \mathbb{N}_n$. Our objective is to construct an RCTZFIF $\phi_\epsilon$ such that $\phi_\epsilon(t) > 0$ for all $t \in [t_1, t_n]$. 
	
	To guarantee the preservation of positivity, we derive sufficient conditions on the scaling coefficients $\lambda_j$ and the shape parameters $\alpha_{j,\epsilon}, \beta_{j,\epsilon}, \gamma_{j,\epsilon} \text{ and } \delta_{j,\epsilon}$. These conditions are rigorously formulated in the theorem presented below.
	
	\begin{theorem} \label{thm:positivity}
		Let $\phi_\epsilon$ be the RCTZFIF which interpolates the given data 
		$\{(t_i, f_i),\\ i \in \mathbb{N}_n\}$. Let $\epsilon = (\epsilon_1, \epsilon_2, \dots, \epsilon_{n-1}) \in \{0,1\}^{n-1}$ be the signature of the RCTZFIF $\phi_\epsilon$. Let $f_i > 0$ for all $i \in \mathbb{N}_n$. Let $\alpha_{j,\epsilon}, \beta_{j,\epsilon}, \gamma_{j,\epsilon}, \text{ and } \delta_{j,\epsilon}$ are the shape parameters and $\lambda_j, j\in \mathbb{N}_{n-1}$, is the scaling factor of $\phi_\epsilon$ in the $j^{th}$ sub-interval. Then, $\phi_\epsilon$ 
		preserves the positivity of the data if:
		\begin{enumerate}
			\item The scaling factors are chosen such that: $0 \leq \lambda_j < \min\left\{ a_j, \frac{f_{j + \epsilon_j}}{f_1}, \frac{f_{j + 1 - \epsilon_j}}{f_n} \right\}$.
			\item The shape parameters satisfy the following: $\alpha_{j,\epsilon} > 0, \delta_{j,\epsilon} > 0,$
			\begin{equation*}
				\beta_{j,\epsilon} > \max\left\{ 0, \frac{-2l\alpha_{j,\epsilon} d_{j,\epsilon}^*}{\pi f_{j,\epsilon}^*} \right\},\ 
				\gamma_{j,\epsilon} > \max\left\{ 0, \frac{2l\delta_{j,\epsilon} d_{j+1,\epsilon}^*}{\pi f_{j+1, \epsilon}^*}\right\},
			\end{equation*}
			where $f_{j,\epsilon}^* = f_{j+\epsilon_j} - \lambda_j f_1$, $f_{j+1,\epsilon}^* = f_{j+1-\epsilon_j} - \lambda_j f_n$, $d_{j,\epsilon}^* = a_jd_{j+\epsilon_j} - \lambda_j d_1$, and $d_{j+1,\epsilon}^* = a_jd_{j+1-\epsilon_j} - \lambda_j d_n$.
		\end{enumerate}
	\end{theorem}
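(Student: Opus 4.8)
The plan is to exploit the self-referential functional equation $\phi_\epsilon(L_j(t)) = \lambda_j \phi_\epsilon(t) + M_{j,\epsilon}(t)$ together with the fixed-point characterisation of $\phi_\epsilon$ under the Read--Bajraktarević operator $T$. The whole statement reduces to two pointwise facts on each subinterval $I_j$: that $M_{j,\epsilon}(t) > 0$ for all $t \in I$, and that $\lambda_j \geq 0$. Granting these, I would run an invariance argument. Let $\mathcal{P} = \{ g \in \mathcal{C}(I) : g \geq 0\}$ be the closed cone of nonnegative admissible functions (note the boundary data $f_1, f_n > 0$ are compatible), and show $T(\mathcal{P}) \subseteq \mathcal{P}$. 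Indeed, for $x \in I_j$ one has $(Tg)(x) = \lambda_j\, g(L_j^{-1}(x)) + M_{j,\epsilon}(L_j^{-1}(x))$; if $g \geq 0$ and $\lambda_j \geq 0$, the first summand is nonnegative and the second is strictly positive, so $Tg > 0$. Since $|\lambda_j| < a_j$ makes $T$ a contraction with unique fixed point $\phi_\epsilon$, and $\mathcal{P}$ is closed in the uniform norm, the fixed point lies in $\mathcal{P}$, giving $\phi_\epsilon \geq 0$; a final application of the functional equation ($\phi_\epsilon = T\phi_\epsilon$, so $\phi_\epsilon(x) \geq 0 + M_{j,\epsilon}(\cdots) > 0$ on each $I_j$, and the $I_j$ cover $I$) upgrades this to the desired strict positivity $\phi_\epsilon > 0$.

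The substantive work is therefore establishing $M_{j,\epsilon}(t) = p_{j,\epsilon}(\theta)/q_{j,\epsilon}(\theta) > 0$, and I would split this into the denominator and the numerator. For the denominator, observe that on $\theta \in [0,\pi/2]$ the four trigonometric basis functions $B_0 = (1-\sin\theta)^3$, $B_1 = \sin\theta(1-\sin\theta)^2$, $B_2 = \cos\theta(1-\cos\theta)^2$, $B_3 = (1-\cos\theta)^3$ are all nonnegative. Hence $\alpha_{j,\epsilon}, \delta_{j,\epsilon} > 0$ together with $\beta_{j,\epsilon}, \gamma_{j,\epsilon} \geq 0$ force $q_{j,\epsilon}(\theta) > 0$ throughout: the endpoints $\theta = 0$ and $\theta = \pi/2$ reduce to $\alpha_{j,\epsilon}$ and $\delta_{j,\epsilon}$ respectively, while on the interior $B_0 > 0$ already gives $q_{j,\epsilon}(\theta) \geq \alpha_{j,\epsilon} B_0 > 0$. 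Because the same nonnegative basis functions multiply the numerator coefficients $U_{j,\epsilon}, V_{j,\epsilon}, W_{j,\epsilon}, X_{j,\epsilon}$, it suffices to prove that each of these four coefficients is strictly positive to conclude $p_{j,\epsilon}(\theta) > 0$.

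The coefficient positivity is exactly where the stated hypotheses enter. Writing $f_{j,\epsilon}^* = f_{j+\epsilon_j} - \lambda_j f_1$ and $f_{j+1,\epsilon}^* = f_{j+1-\epsilon_j} - \lambda_j f_n$, the scaling bound $0 \leq \lambda_j < \min\{a_j, f_{j+\epsilon_j}/f_1, f_{j+1-\epsilon_j}/f_n\}$ is precisely what forces $f_{j,\epsilon}^* > 0$ and $f_{j+1,\epsilon}^* > 0$; combined with $\alpha_{j,\epsilon}, \delta_{j,\epsilon} > 0$ this gives $U_{j,\epsilon} = \alpha_{j,\epsilon} f_{j,\epsilon}^* > 0$ and $X_{j,\epsilon} = \delta_{j,\epsilon} f_{j+1,\epsilon}^* > 0$ immediately. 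The remaining coefficients carry a derivative term of indefinite sign, namely $V_{j,\epsilon} = \beta_{j,\epsilon} f_{j,\epsilon}^* + \tfrac{2l\alpha_{j,\epsilon} d_{j,\epsilon}^*}{\pi}$ and $W_{j,\epsilon} = \gamma_{j,\epsilon} f_{j+1,\epsilon}^* - \tfrac{2l\delta_{j,\epsilon} d_{j+1,\epsilon}^*}{\pi}$. Solving $V_{j,\epsilon} > 0$ for $\beta_{j,\epsilon}$ and $W_{j,\epsilon} > 0$ for $\gamma_{j,\epsilon}$, dividing through by the positive quantities $f_{j,\epsilon}^*$ and $f_{j+1,\epsilon}^*$, reproduces exactly the stated lower bounds on $\beta_{j,\epsilon}$ and $\gamma_{j,\epsilon}$. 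The main obstacle I anticipate is the zipper bookkeeping: each coefficient depends on the signature entry $\epsilon_j \in \{0,1\}$ through which endpoint value $f_{j+\epsilon_j}, f_{j+1-\epsilon_j}$ (and analogously $d_{j+\epsilon_j}, d_{j+1-\epsilon_j}$) is selected, so I would carry the two cases $\epsilon_j = 0$ and $\epsilon_j = 1$ in parallel and verify that the compact notation $f_{j,\epsilon}^*, f_{j+1,\epsilon}^*, d_{j,\epsilon}^*, d_{j+1,\epsilon}^*$ genuinely unifies both orientations, so that a single set of inequalities suffices for every segment regardless of direction.
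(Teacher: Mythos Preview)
Your proposal is correct and follows essentially the same route as the paper: reduce positivity of $\phi_\epsilon$ to positivity of each $M_{j,\epsilon}$, then to positivity of the four numerator coefficients $U_{j,\epsilon},V_{j,\epsilon},W_{j,\epsilon},X_{j,\epsilon}$, and read off the stated bounds on $\lambda_j,\beta_{j,\epsilon},\gamma_{j,\epsilon}$ from these inequalities. The one place you go beyond the paper is the self-referential step: the paper simply writes ``assuming $\phi_\epsilon(t)\geq 0$'' before invoking the functional equation, whereas you make this rigorous via invariance of the closed nonnegative cone under the Read--Bajraktarevi\'c operator and then upgrade to strict positivity---a cleaner justification of the same idea.
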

	\begin{proof}
		To ensure that the constructed $\mathcal{C}^1$-RCTZFIF $\phi_\epsilon$ remains positive over the domain $[t_1, t_n]$, it is sufficient to ensure that
		\begin{align}
			\phi_\epsilon(L_j(t)) > 0, \forall t \in [t_1, t_n], j\in \mathbb{N}_{n-1}.
		\end{align}
		From the referential equation~\eqref{eq_phie}, the above inequality is equivalent to
		\begin{align} \label{pos_con}
			\lambda_j \phi_\epsilon(t) + \frac{p_{j,\epsilon}(\theta)}{q_{j,\epsilon}(\theta)} > 0.
		\end{align}
		Assuming $\phi_\epsilon(t) \geq 0$, for all $t \in [t_1, t_n]$ and $0 \leq \lambda_j < 1$. It suffices to ensure that the rational term $\frac{p_{j,\epsilon}(\theta)}{q_{j,\epsilon}(\theta)}$ is strictly non-negative over the interval.
		
		Since $q_{j,\epsilon}(\theta)$ is constructed using non-negative shape parameters $\alpha_{j,\epsilon}, \beta_{j,\epsilon}, \gamma_{j,\epsilon},$ and $\delta_{j,\epsilon}$, it follows that $q_{j,\epsilon}(\theta) > 0$ for all $\theta \in [0, \pi/2]$. Thus, non-negativity of the numerator $p_{j,\epsilon}(\theta)$ ensures the desired result.
		
		Recall from Section~\ref{sec:construction} that $U_{j,\epsilon} = \alpha_{j,\epsilon}(f_{j+\epsilon_j} - \lambda_j f_1)$. So $U_{j,\epsilon} > 0$ whenever $\lambda_j < \frac{f_{j+\epsilon_j}}{f_1}$. Similarly, $X_{j,\epsilon} = \gamma_{j,\epsilon}(f_{j+1-\epsilon_j} - \lambda_j f_n) > 0 \ \text{if}\ \lambda_j < \frac{f_{j+1-\epsilon_j}}{f_n}$.
		
		For the term $V_{j,\epsilon} = \beta_{j,\epsilon}(f_{j+\epsilon_j} - \lambda_j f_1) + \frac{2l \alpha_{j,\epsilon} d_{j,\epsilon_j}^*}{\pi}$, we distinguish two cases. If $d_{j,\epsilon_j}^* \geq 0$, then $V_{j,\epsilon} > 0$ for any non-negative $\alpha_{j,\epsilon}$, $\beta_{j,\epsilon}$, and $\lambda_j < \frac{f_{j+\epsilon_j}}{f_1}$. Otherwise, if $d_{j,\epsilon_j}^* < 0$, $V_{j,\epsilon} > 0$ provided $\beta_{j,\epsilon} > \frac{-2l \alpha_{j,\epsilon} d_{j,\epsilon_j}^*}{\pi f_{j,\epsilon}^*}$. Likewise, for $W_{j,\epsilon} = \delta_{j,\epsilon}(f_{j+1-\epsilon_j} - \lambda_j f_n) - \frac{2l \gamma_{j,\epsilon} d_{j+1,\epsilon_j}^*}{\pi}$, positivity holds under the condition $\lambda_j < \frac{f_{j+1-\epsilon_j}}{f_n}$ when $d_{j+1,\epsilon_j}^* \leq 0$. If $d_{j+1,\epsilon_j}^* > 0$, then positivity requires $\gamma_{j,\epsilon} > \frac{2l \delta_{j,\epsilon} d_{j+1,\epsilon_j}^*}{\pi f_{j+1,\epsilon}^*}$. Thus, ensuring the above inequalities guarantees that $p_{j,\epsilon}(\theta) > 0$, and hence, by Equation~\eqref{pos_con}, the interpolant $\phi_\epsilon$ remains strictly positive on $[t_1, t_n]$.
	\end{proof}

\section{Numerical Experiments}
\label{sec:numexp}
We demonstrate the positivity-preserving nature of RCTZFIFs using the strictly positive univariate data set $\{(1,14), (3,2), (8, 0.8), (10, 0.65), (11, 0.75),\\ (12, 0.7), (16, 0.69)\}$. Figure~\ref{fig:rctzfif_all} illustrates various RCTZFIF configurations alongside classical interpolants for comparative analysis.

\begin{figure}[!htb]
	\centering
	\subfloat[Non-positive RCTZFIF $\phi_\epsilon^1$\label{fig:rctzfif_1a}]{\includegraphics[width=0.5\textwidth]{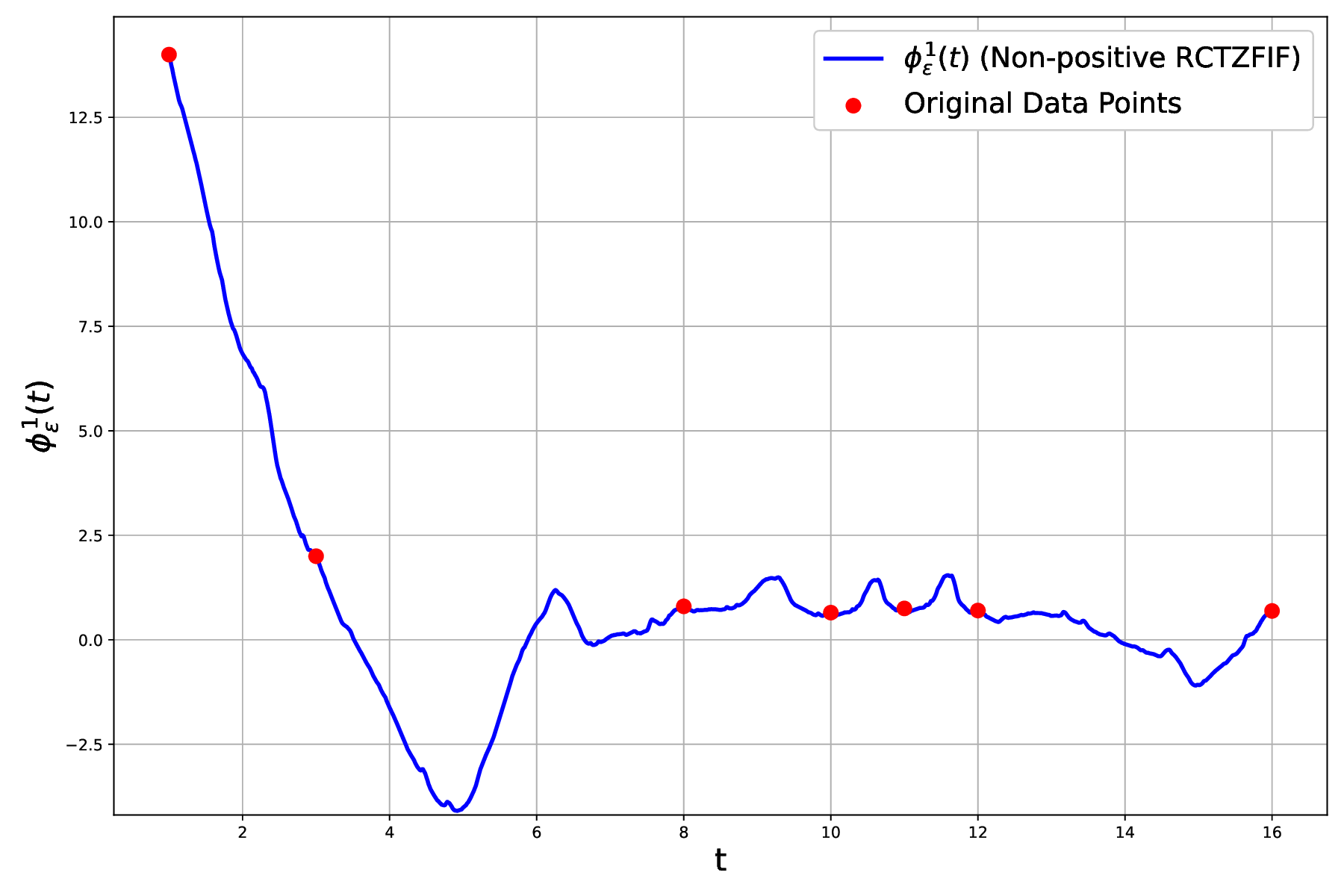}}\hfill
	\subfloat[Positive RCTZFIF $\phi_\epsilon^2$ \label{fig:rctzfif_1b}]{\includegraphics[width=0.5\textwidth]{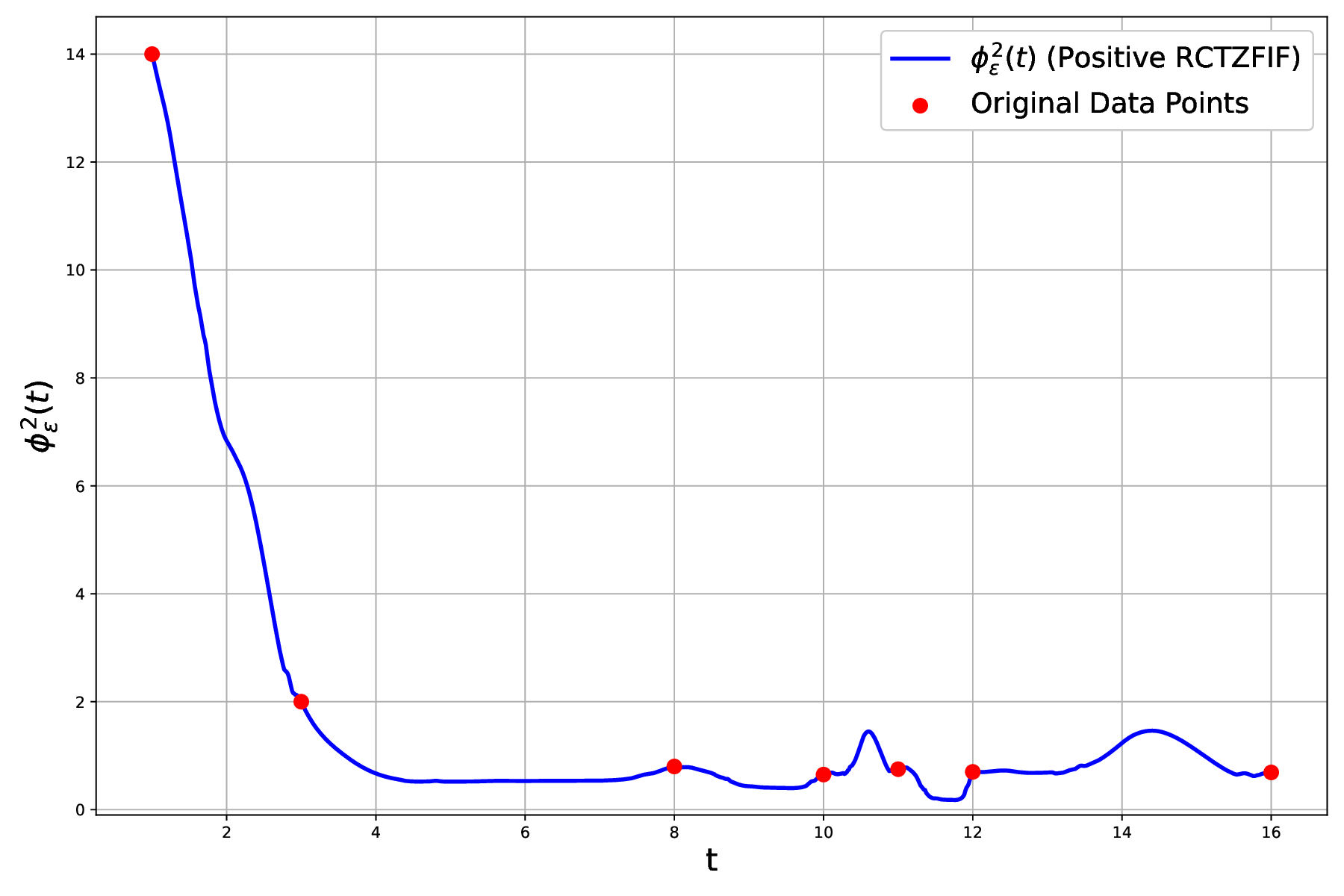}}\\
	\subfloat[Local perturbation effects of scaling parameters $\lambda_j$: $\phi_\epsilon^3$\label{fig:rctzfif_1c}]{\includegraphics[width=0.5\textwidth]{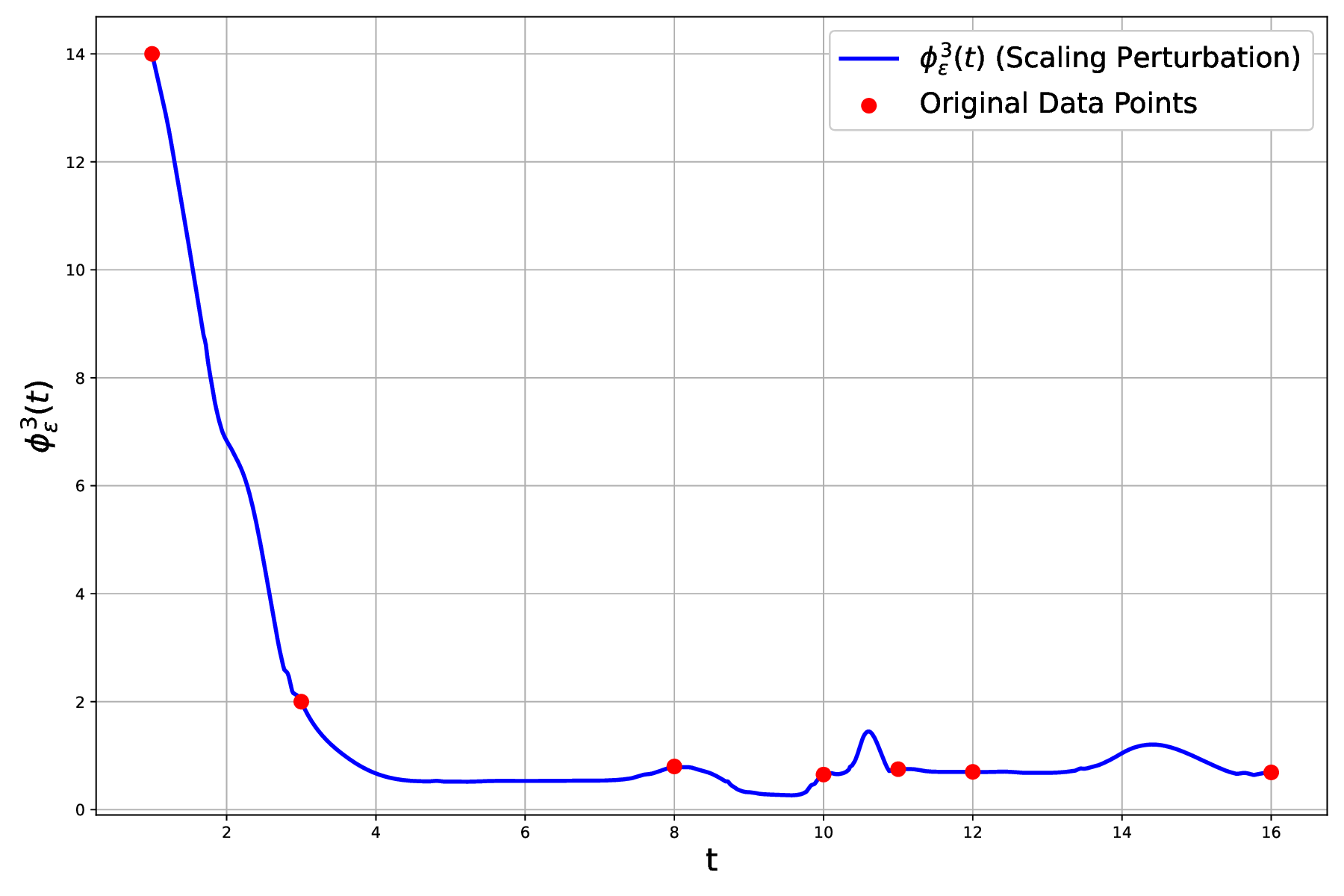}}\hfill
	\subfloat[Combined perturbation of $\lambda_j$, $\beta_j$, and $\gamma_j$: $\phi_\epsilon^4$\label{fig:rctzfif_1d}]{\includegraphics[width=0.5\textwidth]{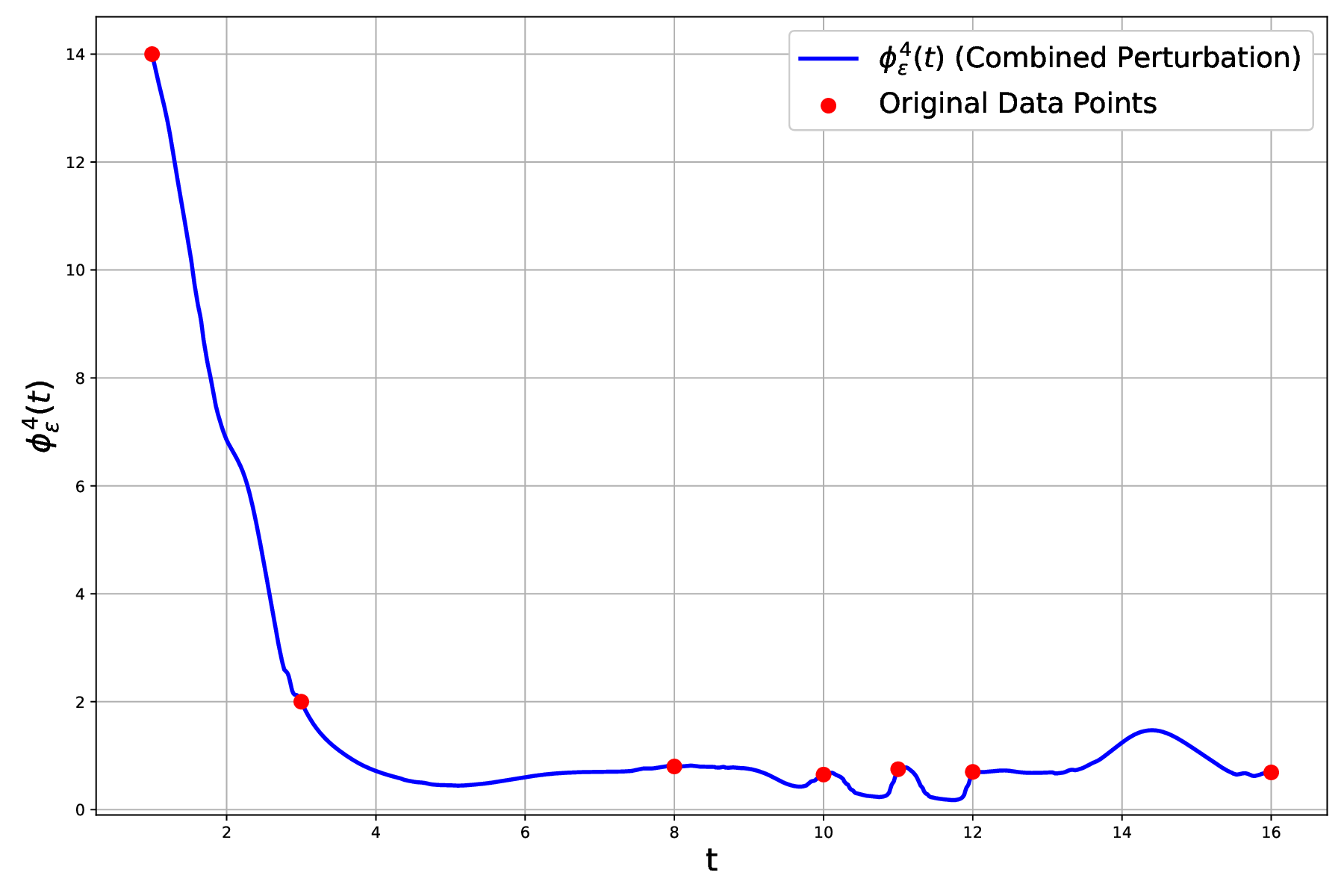}}\\
	\subfloat[Classical interpolant $\psi_\epsilon^1$ with $\epsilon = (0,0,0,0,0,0)$\label{fig:rctzfif_1e}]{\includegraphics[width=0.5\textwidth]{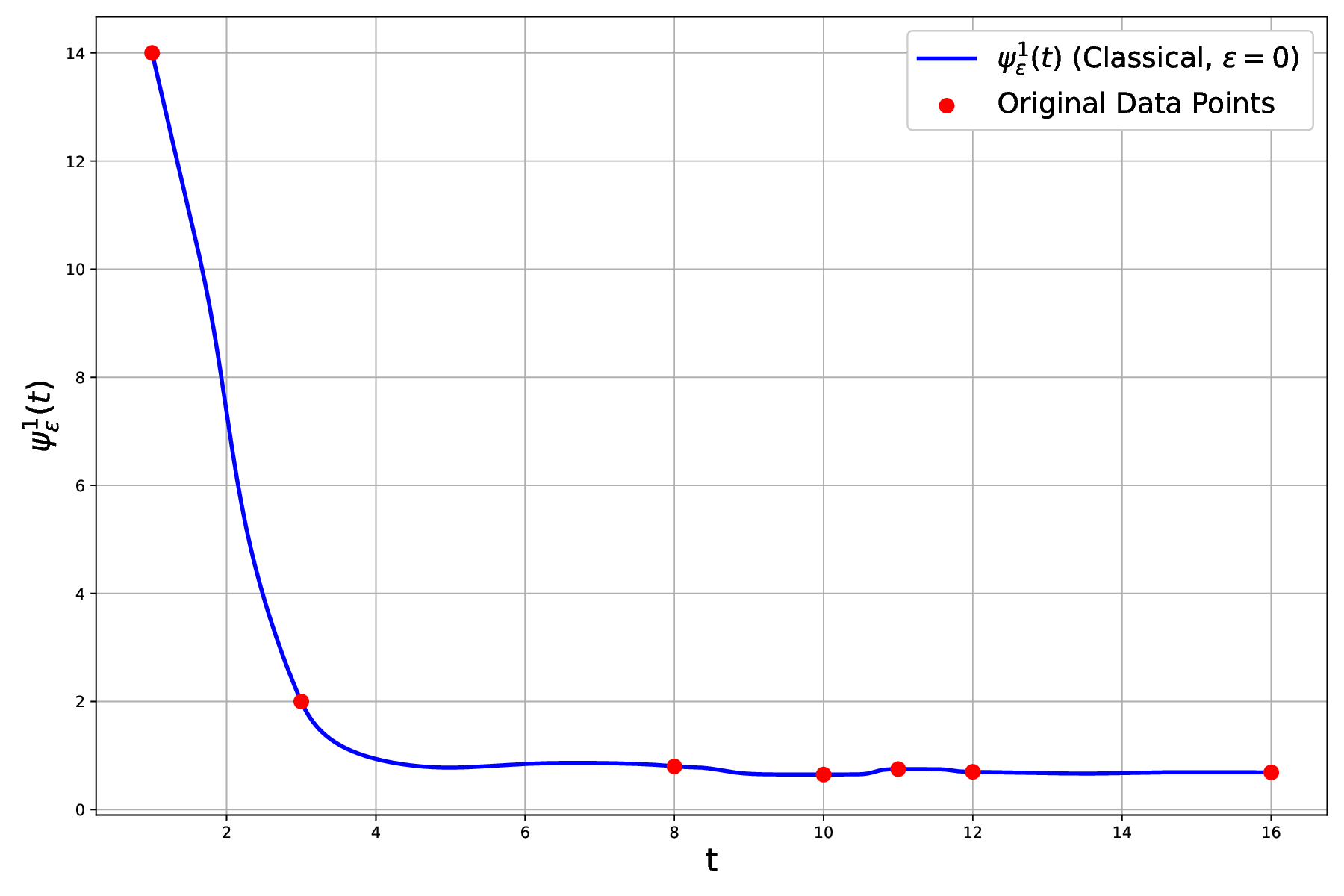}}\hfill
	\subfloat[Classical interpolant $\psi_\epsilon^2$ with $\epsilon = (1,1,1,1,1,1)$\label{fig:rctzfif_1f}]{\includegraphics[width=0.5\textwidth]{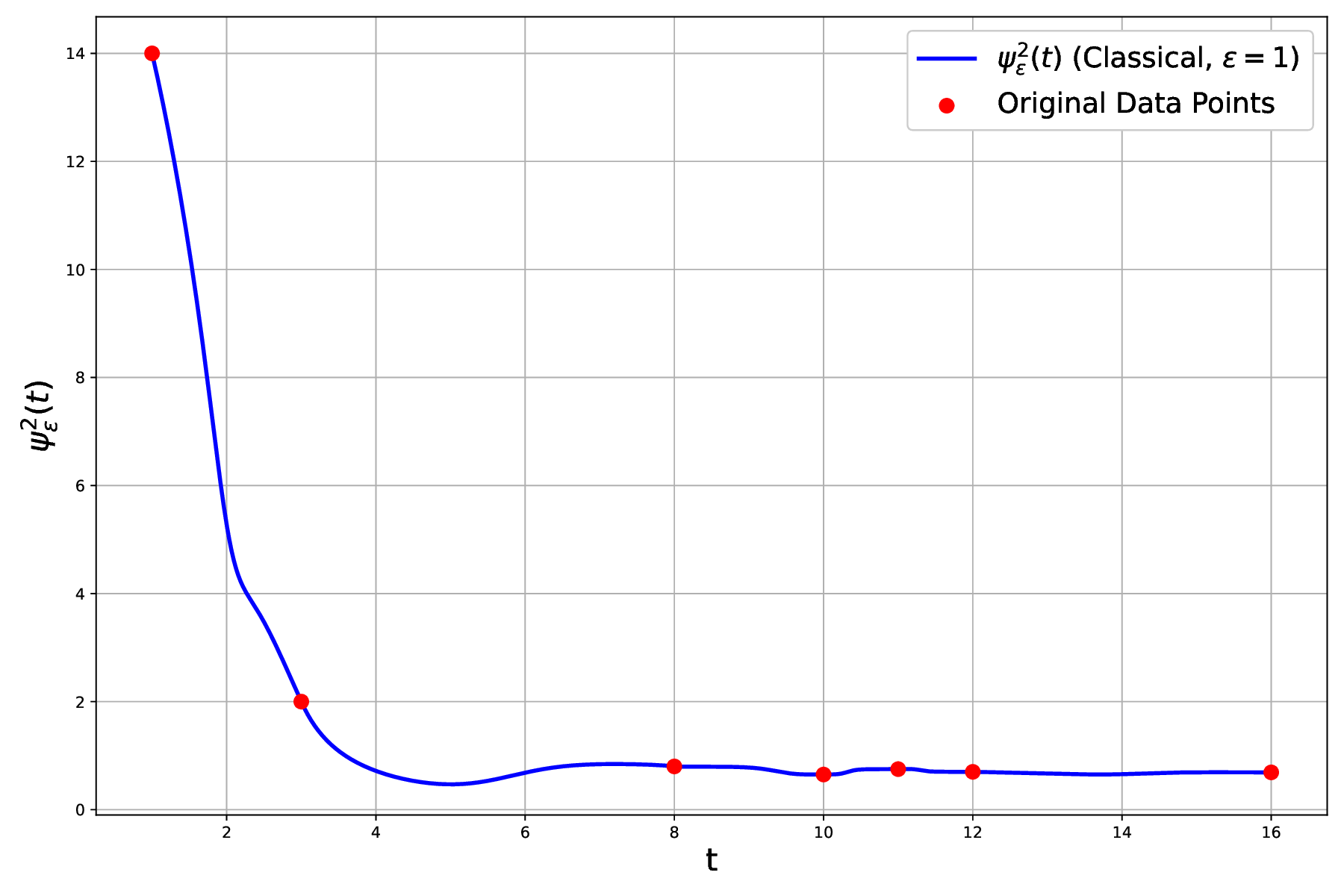}}
	\caption{Comparative visualization of RCTZFIFs and their classical counterparts RCTZIFs under different parameter configurations from Table \ref{table:rctzfif_params}.}
	\label{fig:rctzfif_all}
\end{figure}

\begin{figure}[!htb]
	\centering
	\subfloat[Second derivative of $\phi_\epsilon^1$\label{fig:rctzfif_2a}]{\includegraphics[width=0.5\textwidth]{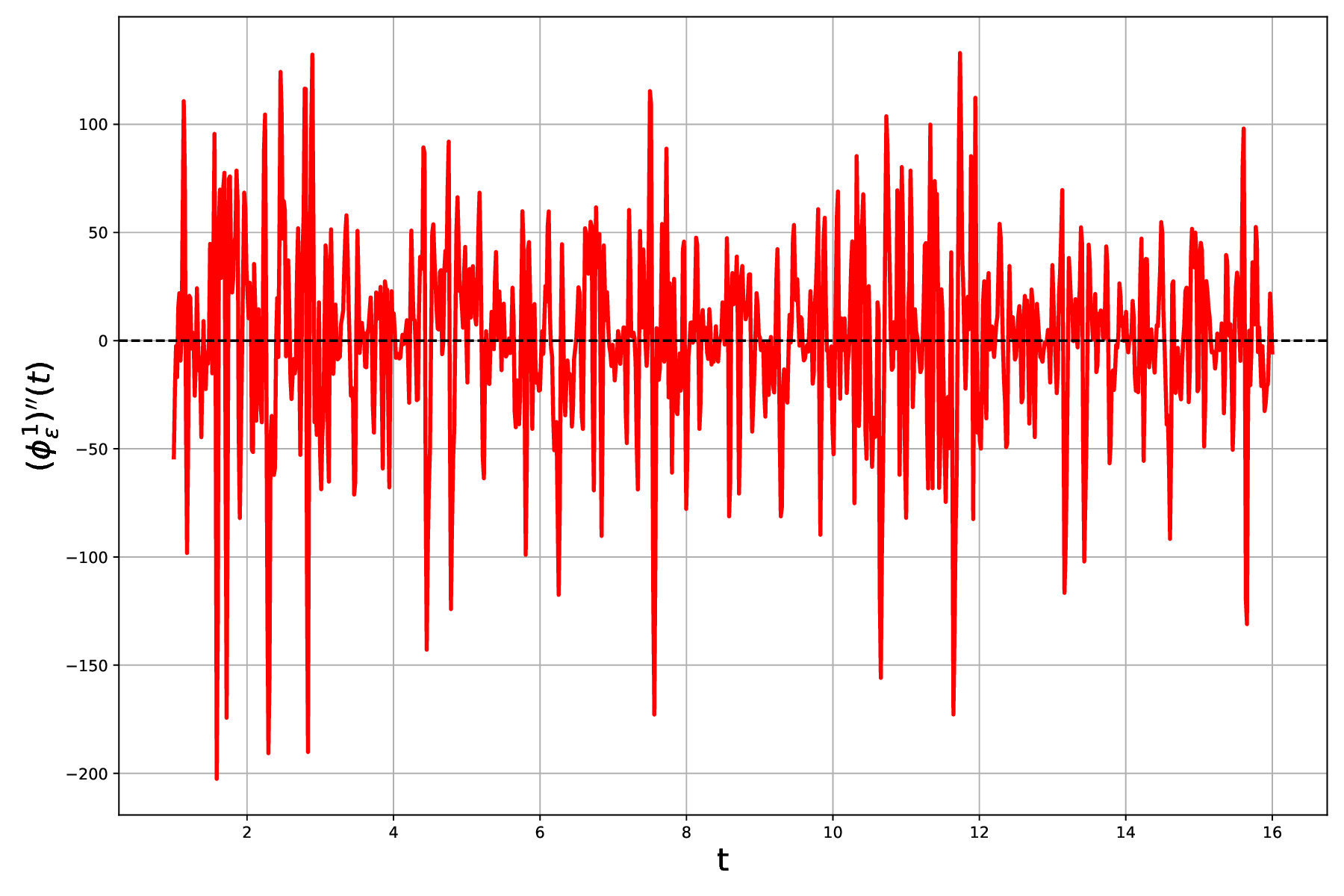}}\hfill
	\subfloat[Second derivative of $\phi_\epsilon^2$ \label{fig:rctzfif_2b}]{\includegraphics[width=0.5\textwidth]{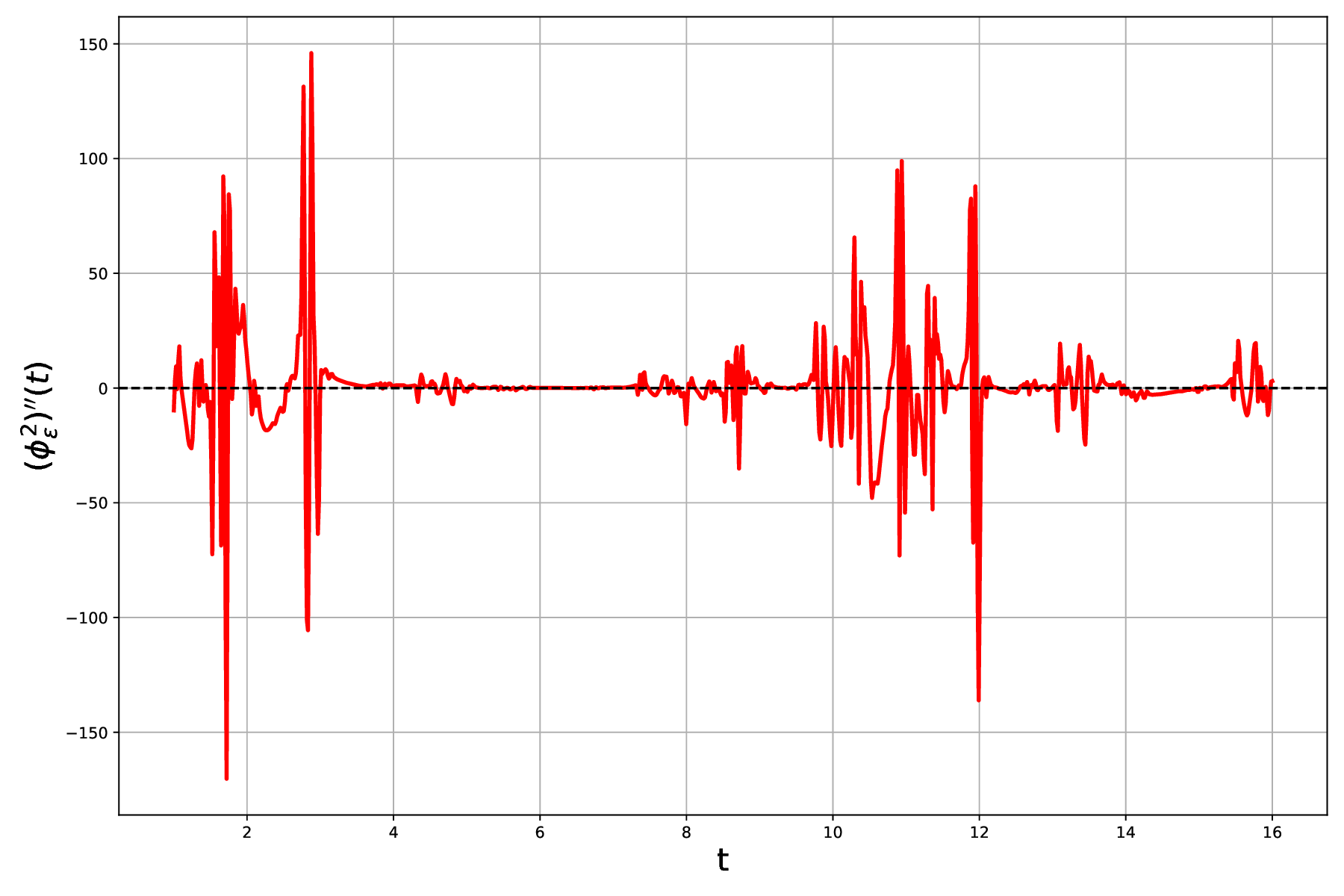}}\\
	\subfloat[Second derivative of $\phi_\epsilon^3$\label{fig:rctzfif_2c}]{\includegraphics[width=0.5\textwidth]{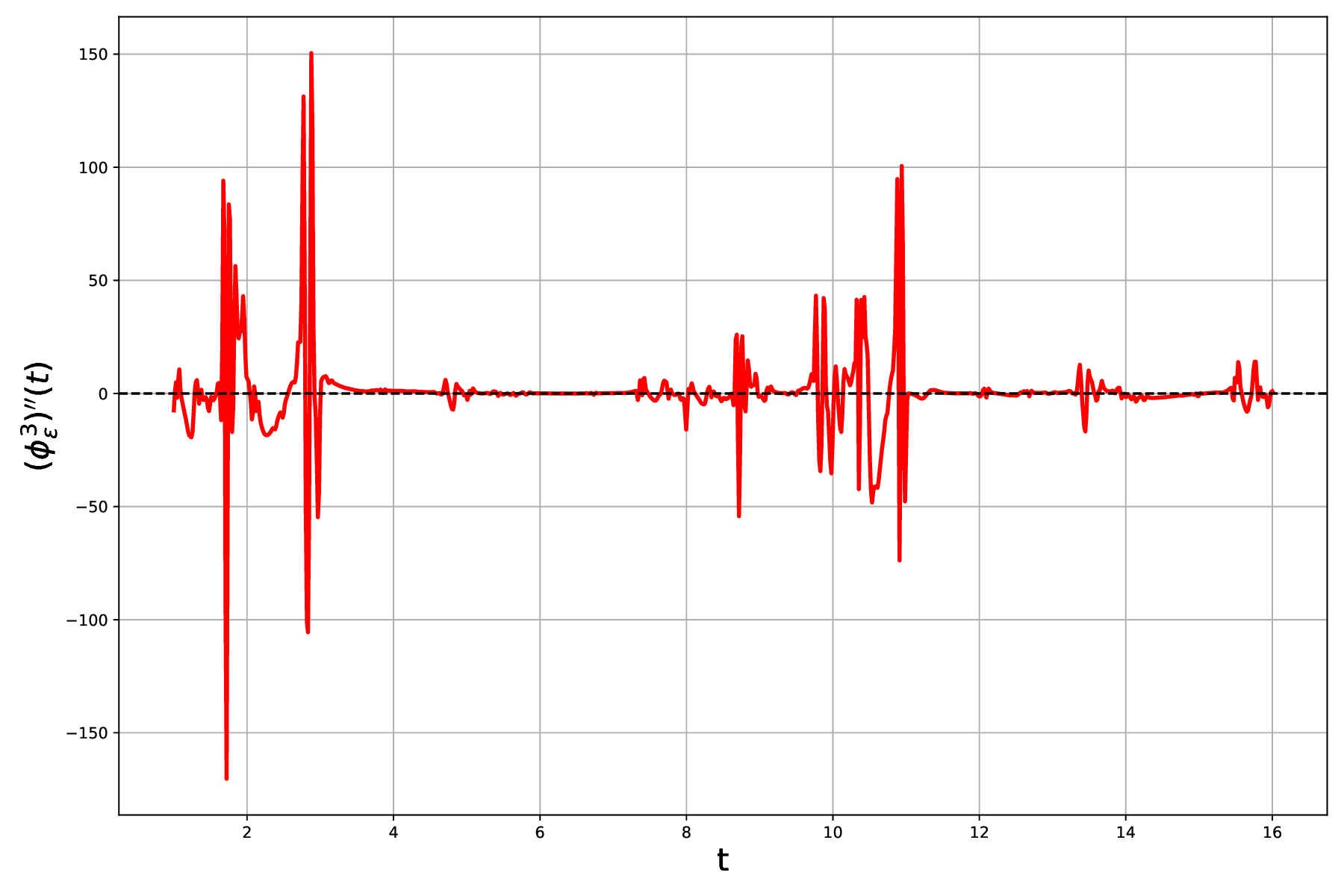}}\hfill
	\subfloat[Second derivative of $\phi_\epsilon^4$\label{fig:rctzfif_2d}]{\includegraphics[width=0.5\textwidth]{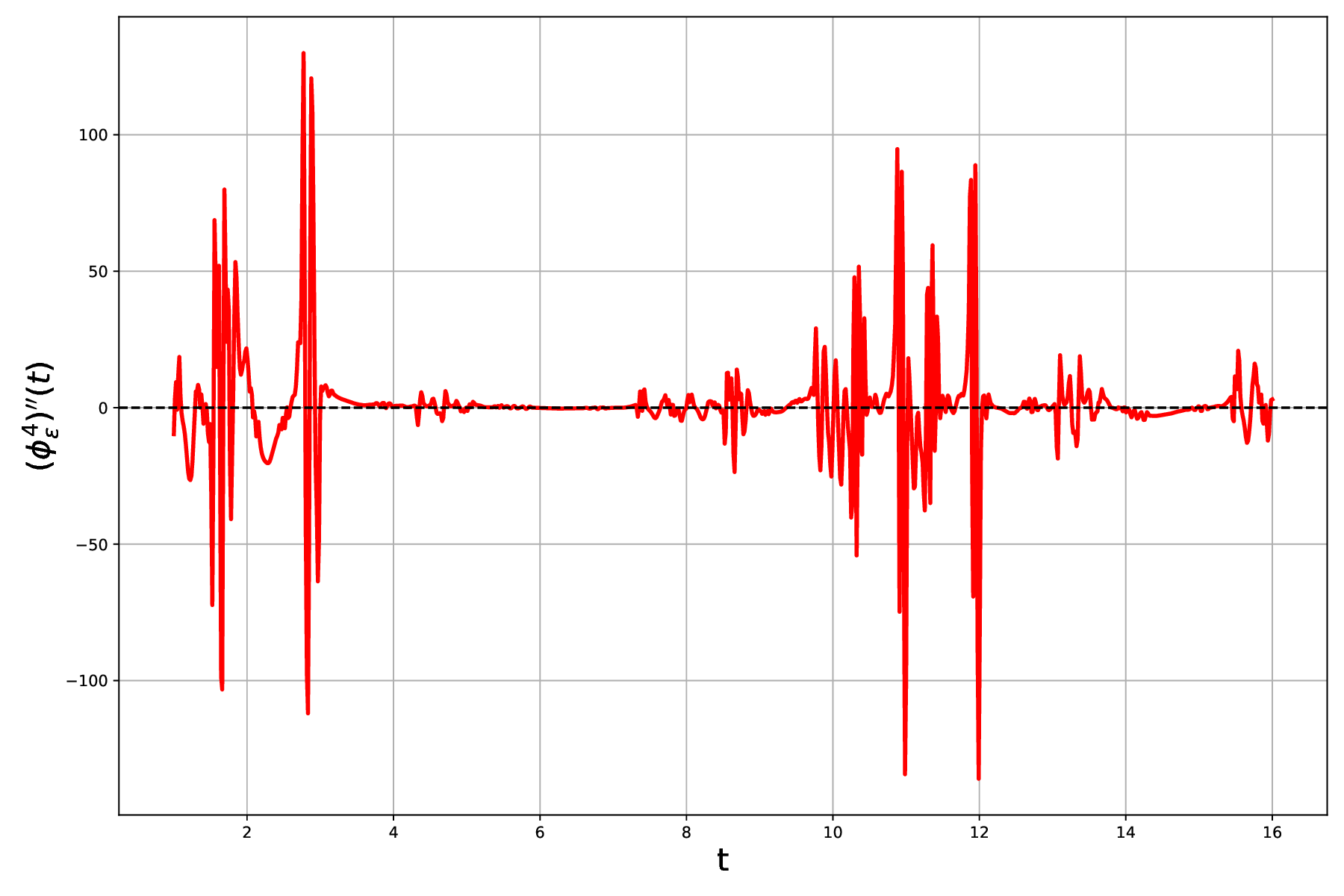}}\\
	\subfloat[Second derivative of $\psi_\epsilon^1$\label{fig:rctzfif_2e}]{\includegraphics[width=0.5\textwidth]{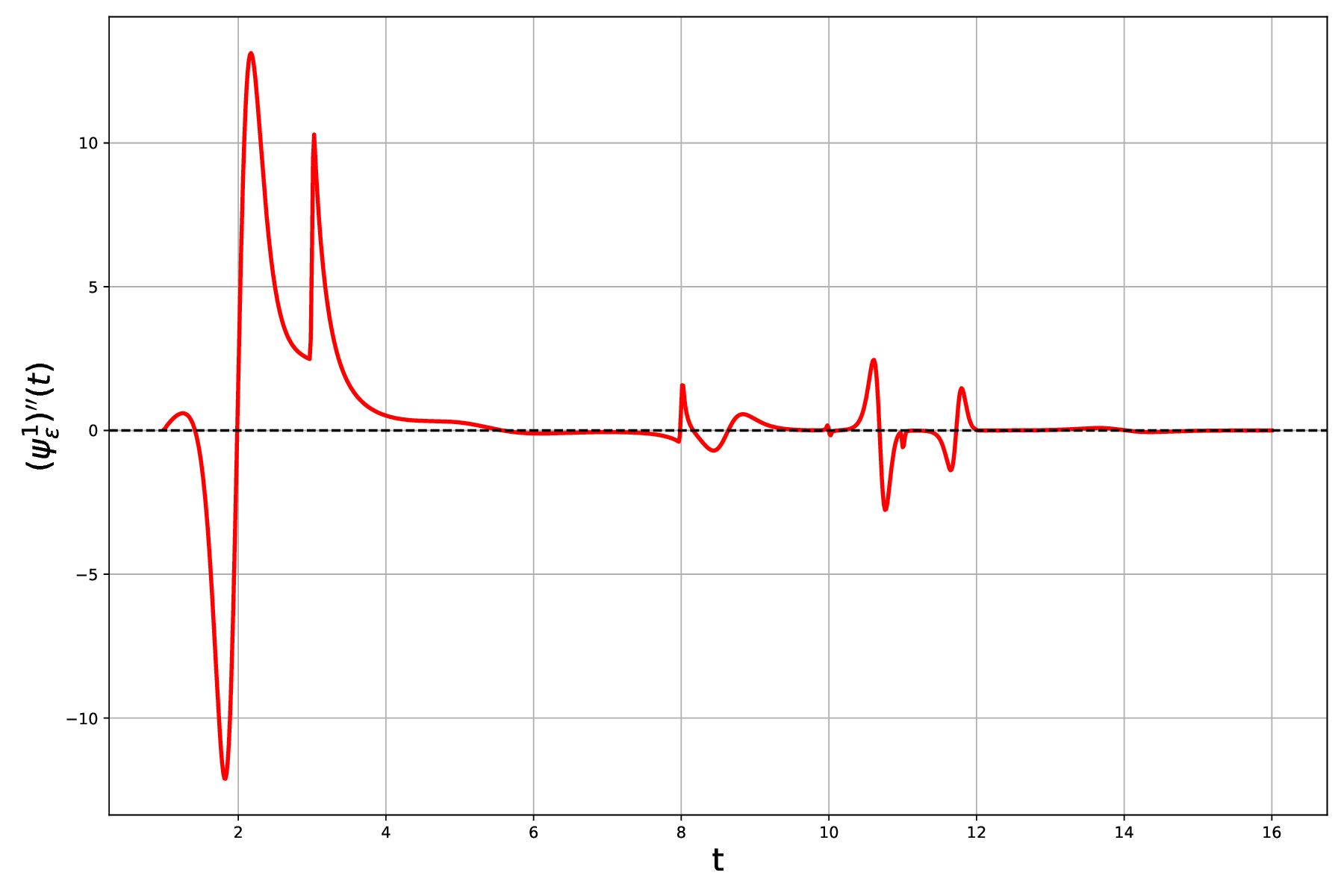}}\hfill
	\subfloat[Second derivative of $\psi_\epsilon^2$\label{fig:rctzfif_2f}]{\includegraphics[width=0.5\textwidth]{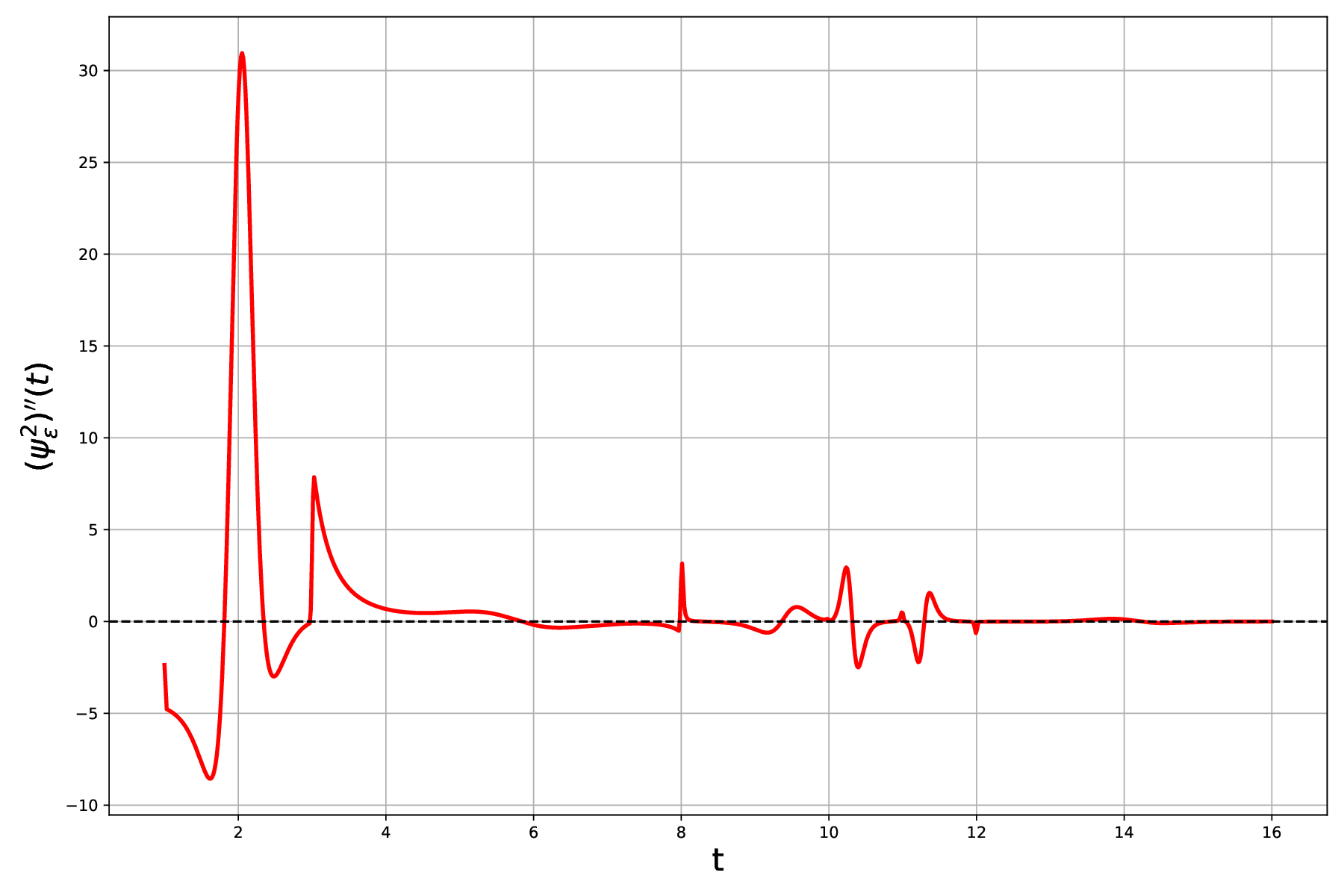}}
	\caption{Comparative visualization of second order derivatives of RCTZFIFs and their classical counterparts RCTZIFs from Fig. \ref{fig:rctzfif_all}.}
	\label{fig:2ndderrctzfif_all}
\end{figure}

Figure~\ref{fig:rctzfif_1a} displays an RCTZFIF generated without enforcing the positivity constraints from Theorem~\ref{thm:positivity}, resulting in non-positive values in the intervals $[3,8]$ and $[12,16]$. This underscores the necessity of adhering to the theoretical bounds. The prescribed bounds from Theorem~\ref{thm:positivity} for this dataset are $\lambda_1 \in [0, 0.1333]$,\ $\lambda_2 \in [0, 0.0571]$,\ $\lambda_3 \in [0, 0.0464]$,\ $\lambda_4 \in [0, 0.0535]$,\ $\lambda_5 \in [0, 0.05]$,\ and $\lambda_6 \in [0, 0.0492]$. Throughout our experiments, we fix $\alpha_{j,\epsilon} = 0.5$ and $\delta_{j,\epsilon} = 1$ across all subintervals. Figure~\ref{fig:rctzfif_1b} demonstrates a positivity-preserving RCTZFIF with parameters satisfying Theorem~\ref{thm:positivity}, maintaining non-negativity throughout $[1,16]$. Subsequent figures illustrate parameter sensitivity: Figure~\ref{fig:rctzfif_1c} shows localized effects of scaling parameter perturbations, while Figure~\ref{fig:rctzfif_1d} reveals how shape parameters $\beta_j$ and $\gamma_j$ influence local curvature. For benchmarking, Figures~\ref{fig:rctzfif_1e} and~\ref{fig:rctzfif_1f} present classical interpolants obtained by setting $\lambda_j = 0$ with different signature sequences. While these maintain positivity, they lack the fractal irregularity modeling capabilities of the full RCTZFIF framework.

\begin{table}[!htb]
	\centering
	\caption{Parameter configurations for RCTZFIF visualizations in Figure~\ref{fig:rctzfif_all}}
	\label{table:rctzfif_params}
	\begin{tabular}{|c|l|} \hline
		\textbf{Figure} & \textbf{IFS Parameters} \\ \hline
		\ref{fig:rctzfif_1a} & 
		$\lambda = [0.1323,0.2419,0.0561,0.0454,0.0526,0.149]$,\\
		& $\beta = [0.5028,1.1853,0.5,0.5,0.5,3.9649]$,\\
		& $\gamma = [0.5,0.5,0.5868,0.5221,0.5,0.5]$, 
		$\epsilon = [1,1,1,1,1,1]$ \\ \hline
		\ref{fig:rctzfif_1b} & 
		$\lambda = [0.1323,0.0201,0.0261,0.0454,0.0426,0.049]$,\\
		& $\beta = [0.5028,172.6956,6.5,0.5,22.5,0.5]$,\\
		& $\gamma = [0.5,5.5,0.5300,0.5221,0.5,0.5]$,  
		$\epsilon = [1,1,1,1,1,1]$ \\ \hline
		\ref{fig:rctzfif_1c} & 
		$\lambda = [0.1323,0.0201,0.0400,0.0454,0.0001,0.033]$,\\
		& $\beta = [0.5028,172.6956,6.5,0.5,22.5,0.5]$,\\
		& $\gamma = [0.5,5.5,0.5300,0.5221,0.5,0.5]$, 
		$\epsilon = [1,1,1,1,1,1]$ \\ \hline
		\ref{fig:rctzfif_1d} & 
		$\lambda = [0.1323,0.0201,0.0261,0.0454,0.0426,0.049]$,\\
		& $\beta = [0.5028,3.56,6.5,12.5,22.5,0.5]$,\\
		& $\gamma = [0.5,5.5,53,0.5221,0.5,0.5]$, 
		$\epsilon = [1,1,1,1,1,1]$ \\ \hline
		\ref{fig:rctzfif_1e} & 
		$\lambda = [0,0,0,0,0,0]$, 
		$\beta = [0.5028,3.56,6.5,12.5,22.5,0.5]$,\\
		& $\gamma = [0.5,5.5,53,0.5221,0.5,0.5]$, 
		$\epsilon = [0,0,0,0,0,0]$ \\ \hline
		\ref{fig:rctzfif_1f} &
		$\lambda = [0,0,0,0,0,0]$, 
		$\beta = [0.5028,3.56,6.5,12.5,22.5,0.5]$,\\
		& $\gamma = [0.5,5.5,53,0.5221,0.5,0.5]$, 
		$\epsilon = [1,1,1,1,1,1]$ \\ \hline
	\end{tabular}
\end{table}

	\begin{table}[h!]
	\centering
	\caption{Computational performance comparison (execution times in seconds)}
	\label{tab:computational_performance}
	\begin{tabular}{|l|c|c|c|c|c|}
		\hline
		Method & $N_{\text{plot}} = 37$ & $N_{\text{plot}} = 217$ & $N_{\text{plot}} = 1297$ & $N_{\text{plot}} = 7777$ & $N_{\text{plot}} = 46657$\\
		\hline
		Linear & 0.000253 & 0.001151 & 0.005161 & 0.029853  & 0.188199\\
		Cubic Spline & 0.000395 & 0.001173 & 0.006101 & 0.033135  & 0.188212\\
		PCHIP & 0.000374 & 0.001213 & 0.005662 & 0.033353  & 0.190643\\
		\textbf{RCTZFIF} & \textbf{1.4294} & \textbf{1.3479} & \textbf{1.7631} & \textbf{5.3084}  & \textbf{83.9312}\\
		\hline
	\end{tabular}
\end{table}

Table~\ref{tab:computational_performance} provides a systematic comparison of computational performance between RCTZFIF and classical interpolation methods for varying numbers of evaluation points ($N_{\text{plot}}$). The proposed method demonstrates a significant computational overhead, ranging from approximately 3,600$\times$ slower than cubic splines at $N_{\text{plot}} = 37$ to 446$\times$ slower at $N_{\text{plot}} = 46657$. This overhead is justified by several advanced computational requirements: iterative evaluation of fractal self-referential functional equations, computation of trigonometric rational basis functions, enforcement of positivity constraints, and management of $3(n-1)$ tunable parameters for precise shape control. Despite the constant-factor overhead, RCTZFIF exhibits favorable scaling characteristics. While classical methods show near-linear scaling, RCTZFIF maintains favorable $\mathcal{O}(n)$ scaling with evaluation points, ensuring computational tractability for practical scientific applications where mathematical guarantees outweigh raw computational speed.

Our systematic parameter tuning approach reveals distinct roles for each parameter class. Scaling parameters $\lambda_j$ predominantly control fractal complexity and local regularity, while shape parameters $\beta_{j,\epsilon}$, and $\gamma_{j,\epsilon}$ fine-tune local curvature and monotonicity behavior. The sensitivity analysis demonstrates that minor perturbations in these parameters significantly affect the roughness characteristics of the resulting interpolants, as evidenced by the second derivative plots. This parameter sensitivity creates a rich optimization landscape where evolutionary approaches show promise for identifying configurations that optimally balance interpolation accuracy with smoothness requirements.
	
	\section{Conclusion}
	\label{sec:conclusion}
	In this work, we introduced the RCTZFIFs, an interpolation framework that integrates the geometric flexibility of fractals with the structural control offered by rational cubic trigonometric functions. Our approach ensured the preservation of positivity in univariate datasets by imposing the suitable restrictions on the IFS parameters. Theoretical analysis confirmed the convergence of the RCTZFIF to to the original data-generating function. Our claims were validated through numerical experiments, where the effectiveness of our method was demonstrated. Overall, the RCTZFIF provides a powerful and flexible tool for modelling data with inherent geometric feature, positivity. Other shape preserving properties like monotonicity, convexity, etc., are under construction. Additionally, the extension of the proposed method to higher dimensional settings will be explored in future work.
	
	%
	%
	%
	%
	
\end{document}